\tikzstyle{vertex}=[circle, draw, inner sep=2pt, fill=white]
\newtheorem{thm}{Theorem}[section]
\newtheorem{lem}[thm]{Lemma}
\newtheorem{prop}[thm]{Proposition}
\theoremstyle{definition}
\newtheorem{defn}[thm]{Definition}
\newtheorem{rem}[thm]{Remark}
\numberwithin{equation}{section}
\numberwithin{thm}{section}
\newcommand{\R}{\mathbb{R}}
\newcommand{\PP}{\mathbb{P}}
\newcommand{{\EE}}{\mathbb{E}}
\newcommand{\dbra}[1]{[\kern-0.15em[ #1 ]\kern-0.15em]}
\newcommand{\dbraco}[1]{[\kern-0.15em[ #1 [\kern-0.15em[}
\newcommand{\dbraoc}[1]{]\kern-0.15em] #1 ]\kern-0.15em]}
\newcommand{\dbraoo}[1]{]\kern-0.15em] #1 [\kern-0.15em[}
\newcommand{\Ccal}{\mathcal{C}}
\newcommand{\Fcal}{\mathcal{F}}
\newcommand{\Gcal}{\mathcal{G}}
\newcommand{\Lcal}{\mathcal{L}}
\newcommand{\Ncal}{\mathcal{N}}
\newcommand{\Pcal}{\mathcal{P}}
\newcommand{\FF}{\mathbb{F}}
\newcommand{\HH}{\mathbb{H}}
\newcommand{\RR}{\mathbb{R}}
\newcommand{\NN}{\mathbb{N}}
\def\sw{{\overline{\omega}}}
\def\balpha{\boldsymbol{\alpha}}
\def\bbeta{\boldsymbol{\beta}}
\def\bgamma{\boldsymbol{\gamma}}
\def\eeta{\boldsymbol{\eta}}
\def\bmu{\boldsymbol{\mu}}
\def\bnu{\boldsymbol{\nu}}
\def\bzeta{\boldsymbol{\zeta}}
\def\bM{\boldsymbol{M}}
\def\bV{\boldsymbol{V}}
\def\bX{\boldsymbol{X}}
\def\bS{\boldsymbol{S}}
\def\bY{\boldsymbol{Y}}
\def\bZ{\boldsymbol{Z}}
\def\bW{\boldsymbol{W}}
\newcommand{\cF}{\mathcal F}
\newcommand{\cL}{\mathcal L}
\newcommand{\cP}{\mathcal P}
\newcommand{\LeftEqNo}{\let\veqno\@@leqno}
\begin{document}

\title{Extended Mean Field Control Problems:  stochastic maximum principle and  transport perspective}
\author{
	Beatrice Acciaio\thanks{Department of Statistics, London School of Economics}
	\and
	Julio Backhoff-Veraguas\thanks{ Institute of Statistics and Mathematical Methods in Economics ,Vienna University of Technology}
	\and
	Ren\'{e} Carmona\thanks{Operations Research and Financial Engineering, Princeton University, Partially supported by NSF \# DMS-1716673 and ARO \# W911NF-17-1-0578} 
}

\date{\today}

\maketitle

\begin{abstract}
We study Mean Field stochastic control problems where the cost
function and the state dynamics depend upon the
joint distribution of the controlled state and the control process.
We prove suitable versions of the Pontryagin stochastic maximum principle, both in necessary and
in sufficient form, which extend the known conditions to this general framework. 
Furthermore, we suggest a variational approach to study
a weak formulation of these control problems. We show a natural connection between this weak formulation and optimal transport on path space, which inspires a novel discretization scheme.
\end{abstract}

\section{Introduction}
\label{sect:introduction}

The control of stochastic differential equations of Mean Field type, also known as McKean-Vlasov control, did not get much attention before the theory of Mean Field Games became a popular subject of investigation. Indeed the two topics are intimately related through the asymptotic theory of mean field stochastic systems known as \emph{propagation of chaos}. See for example \cite{CDL} for an early discussion of the similarities and the differences of the two problems. Among the earliest works on this new form of control problem relevant to the spirit of the analysis conducted  in this paper are \cite{buckdahn2009mean,buckdahn2011general,andersson2011maximum,meyer2012mean,BFY,CD_AP}. Here, we follow the approach introduced and developed in \cite{CD_AP}.
The reader is referred to Chapters 3, 4, and 6 of \cite{CarmonaDelarue_book_vol_I} for a general overview of these problems and an extensive historical perspective. Still, despite the section devoted to the so-called extended mean field games in \cite[Ch.\ 4]{CarmonaDelarue_book_vol_I}, most of these contributions are limited to Mean Field interactions entering the models through the statistical distribution of the state of the system alone.  The goal of the present article is to investigate  the control of stochastic dynamics depending upon the joint distribution of the controlled state and the control process. We refer to such problems as \emph{extended Mean Field control problems}.

Our first contribution is to prove an appropriate form of the Pontryagin stochastic maximum principle, in necessary and in sufficient form, for extended Mean Field control problems. The main driver behind this search for an extension of existing tools, is the importance of many practical applications which naturally fit within the class of models for which the interactions are not only through the distribution of the state of the system, but also through the distribution of the controls used by the controller. The analysis of extended Mean Field control problems had been restricted so far to the Linear Quadratic (LQ) case; see e.g.\ \cite{BP,PW,yong2013linear,graber2016linear}. To the best of our knowledge, the recent work \cite{PW} is the only one where more general models are considered. In that article, however, the authors restrict the analysis to closed-loop feedback controls, leading to a deterministic reformulation of the problem, which is dealt with PDE techniques. In the present paper, we study the extended Mean Field control problem without any restrictions, deriving through the probabilistic approach a Pontryagin maximum principle. 

We apply our optimality conditions for particular classes of models, where our analysis can be pushed further. In the case of scalar interactions, we derive a more explicit form of the optimality condition. The advantage here is that the
analysis can be conducted with a form of classical differential calculus, without the use of the notion of L-differentiability. The announced work \cite{G} studies an application of such class of models in electricity markets. As a special case of scalar interaction, we study an optimal liquidation model, which we are able to solve explicitly. Finally, we consider the case of LQ  models for which we easily derive explicit solutions which can be computed numerically. The results in the LQ setting are compatible with existing results in the literature.

Another contribution of the present article, is the variational study of a weak formulation of the extended Mean Field control problem. Weak formulations have already been studied in the literature, without non-linear dependence in the law of the control, as in \cite[Ch.\ 6]{CarmonaDelarue_book_vol_I} and \cite{La17}. In this framework, we derive an analogue of the Pontryagin principle it the form of a {martingale optimality condition}. Similar statements have been derived in \cite{LZ,CL} under the name of Stochastic Euler-Lagrange condition for different kind of problems. Next, we derive a natural connection between the extended Mean Field control problem and an optimal transport problem on path space. The theory of optimal transport is known to provide a set of tools and results crucial to the understanding of mean field control and mean field games. We illustrate the usefulness of this connection, by building  a discretization scheme for extended Mean Field control based on transport-theoretic tools (as in \cite[Ch.\ 3.6]{Z} for the case without Mean Field terms), and show that this scheme converges monotonically to the value of {the original extended Mean Field control problem}. The explosion in activity regarding numerical optimal transport gives us reason to believe that such discretization schemes can be efficiently implemented; see e.g.\ \cite{cuturi2013sinkhorn,benamou2015iterative,2018-Peyre-computational-ot} for the static setting and\ \cite{pflug2009version,pflug2012distance,pflug2016multistage}  for the dynamic one. 
 
The paper is organized as follows. Section \ref{sect:gMKV} introduces the notations and basic underpinnings for extended mean field control. Section~\ref{sect:Pontryagin} provides the new form of the Pontryagin stochastic maximum principle alluded to earlier. 
In Section \ref{sect:examples} we study classes of models for which our optimality conditions lead to explicit solutions. Section \ref{sect:wMKV} analyses the weak formulation of the problem in connection with optimal transport.  Finally, we collect in the Appendix some technical proofs.

\section{Extended Mean Field Control Problems}
\label{sect:gMKV}

The goal of this short subsection is to set the stage for the statements and proofs of the stochastic maximum principle proven in Section \ref{sect:Pontryagin} below.

\vskip 2pt
Let  $f$, $b$, and $\sigma$ be measurable functions on $\RR^d\times\RR^k\times\cP_2(\RR^d\times\RR^k)$ with values in $\RR$, $\RR^d$, and $\RR^{d\times m}$ respectively, and $g$ be a real valued measurable function on $\RR^d\times\cP_2(\RR^d)$. 
Let  $(\Omega,\cF,\PP)$ be a probability space, $\cF_0\subset \cF$ be a sub sigma-algebra, and $\FF=(\cF_t)_{0\le t\le T}$ the filtration generated by $\cF_0$ and an $m$-dimensional Wiener process $\bW=(W_t)_{0\le t\le T}$. 
We denote by $\mathbb A$ the set of progressively measurable processes $\balpha=(\alpha_t)_{0\le t\le T}$ taking values in a given closed-convex set $A\subset\RR^k$, and satisfying the integrability condition $\EE\int_0^T|\alpha_t|^2dt<\infty$.

We consider the problem of minimizing
$$
J(\balpha)={\mathbb E}\Bigl[\int_0^T f(X_t,\alpha_t,\cL(X_t,\alpha_t)) dt +g\bigl(X_T,\cL(X_T)\bigr)\Bigr]
$$
over the set ${\mathbb A}$ of admissible control processes, under the {dynamic} constraint
\begin{equation}
\label{fo:state}
dX_t=b(X_t,\alpha_t,\cL(X_t,\alpha_t))dt + \sigma(X_t,\alpha_t,\cL(X_t,\alpha_t))dW_t,
\end{equation}
with $X_0$ a fixed $\cF_0$-measurable random variable. 

The symbol $\cL$ stands for the law of the given random element. We shall add mild regularity conditions for the coefficients $b$ and $\sigma$ so that a solution to equation \eqref{fo:state} always exists when $\balpha\in{\mathbb A}$.
For the sake of simplicity, we chose to use time independent coefficients, but all the results would be the same should $f$, $b$ and $\sigma$ depend upon $t$, since time can be included as an extra state in the vector $X$.

The novelty of the above control problem lies in the fact that the cost functional and the controlled SDE depend on the joint distribution of state and control. For this reason we call it \emph{extended Mean Field control problem}. In this generality, this problem has not been studied before. We mention the works \cite{PW,BP,graber2016linear,yong2013linear} for an analysis of particular cases, and different approaches.  

\subsection{Partial L-differentiability of Functions of Measures}
In this subsection, we explain how to compute partial L-derivatives of functions of joint probability laws (i.e. probability measures on product spaces).
We refer to Chapter 5 of \cite{CarmonaDelarue_book_vol_I} for details on the notion of L-differentiation of functions of measures.

Let $u:\RR^q\times \cP_2(\RR^d\times\RR^k) \to \RR$. We use the notation $\xi$ for a generic element of $\cP_2(\RR^d\times\RR^k)$, and $\mu\in\cP_2(\RR^d)$ and $\nu\in\cP_2(\RR^k)$ for its marginals. W denote here by $v$ a generic element of $\RR^q$.  

Let $(\tilde\Omega,\tilde\cF,\tilde\PP)$ be a probability space and $\tilde u$ a lifting of the function $u$. In other words:
$$
\tilde u :\RR^q\times L^2(\tilde\Omega,\tilde\cF,\tilde\PP;\RR^d\times\RR^k)\ni (v,\tilde X,\tilde\alpha)\mapsto \tilde u(v,\tilde X,\tilde\alpha)=u(v,\cL(\tilde X,\tilde\alpha)).
$$
We say that $u$ is L-differentiable at $(v,\xi)$ if there exists a pair $$(\tilde X,\tilde\alpha)\in L^2(\tilde\Omega,\tilde\cF,\tilde\PP;\RR^d\times\RR^k)\, \text{ with }\,\cL(\tilde X,\tilde\alpha)=\xi,$$ such that the lifted function $\tilde u$ is Fr\'echet differentiable at $(v,\tilde X,\tilde\alpha)$. When this is the case, it turns out that the Fr\'echet derivative depends only on the law $\xi$ and not on the specific pair $(\tilde X,\tilde\alpha)$ having distribution $\xi$; see \cite{Cardaliaguet} or Chapter 6 of \cite{CarmonaDelarue_book_vol_I} for details. The Fr\'echet derivative $[D\tilde u](v,\tilde X,\tilde\alpha)$ of the lifting function $\tilde u$ at $(v,\tilde X,\tilde\alpha)$ can be viewed as an element $D\tilde u(v,\tilde X,\tilde\alpha)$ of $\RR^q\times L^2(\tilde\Omega,\tilde\cF,\tilde\PP;\RR^d\times\RR^k)$, in the sense that
\[
[D\tilde u](v,\tilde X,\tilde\alpha)(\tilde Y)=\tilde\EE[D\tilde u(v,\tilde X,\tilde\alpha)\cdot \tilde Y],\quad \textrm{for all}\; \tilde Y\in \RR^q\times L^2(\tilde\Omega,\tilde\cF,\tilde\PP;\RR^d\times\RR^k).
\]
Since $\RR^q\times L^2(\tilde\Omega,\tilde\cF,\tilde\PP;\RR^d\times\RR^k)\cong \RR^q\times L^2(\tilde\Omega,\tilde\cF,\tilde\PP;\RR^d)\times L^2(\tilde\Omega,\tilde\cF,\tilde\PP;\RR^k)$, as in \cite{Cardaliaguet} the random variable $D\tilde u(v,\tilde X,\tilde\alpha)$ can be represented a.s.\ via the random vector
\[
D\tilde u (v,\tilde X,\tilde\alpha)=\left (\partial_v u(v,\cL(\tilde X,\tilde\alpha))(\,\tilde X,\tilde\alpha)\, ,\, \partial_\mu u(v,\cL(\tilde X,\tilde\alpha))(\tilde X,\tilde\alpha)\, ,\, \partial_\nu u(v,\cL(\tilde X,\tilde\alpha))(\tilde X,\tilde\alpha)\, \right) ,
\] 
for measurable functions $\partial_v u(\cdot,\cL(\tilde X,\tilde\alpha))(\cdot,\cdot)$, $\partial_\mu u(\cdot,\cL(\tilde X,\tilde\alpha))(\cdot,\cdot)$, $\partial_\nu u(\cdot,\cL(\tilde X,\tilde\alpha))(\cdot,\cdot)$, all of them defined on $\RR^q\times \RR^d\times\RR^k$ and valued respectively on $\RR^q$, $\RR^d$ and $\RR^k$. We call these functions the partial L-derivatives of $u$ at $(v,\cL(\tilde X,\tilde\alpha))$.

\section{Stochastic Maximum Principle}
\label{sect:Pontryagin}

Our goal is to prove a necessary and a sufficient condition for optimality in the extended class of problems considered in the paper. These are suitable extensions of the Pontryagin stochastic maximum principle conditions.
We define the Hamiltonian  $H$ by:
\begin{equation*}
\label{fo:hamiltonian}
\begin{split}
H(x,\alpha,\xi,y,z)&=b(x,\alpha,\xi)\cdot y +\sigma(x,\alpha,\xi)\cdot z + f(x,\alpha,\xi), 
\end{split}
\end{equation*}
for $(x,\alpha,\xi,y,z) \in \RR^d \times\RR^k\times \cP_{2}(\RR^d\times\RR^k)\times \RR^d\times\RR^{d\times m}$. Naturally, the dot notation for matrices refers to the trace inner product. We let $\HH^{0,n}$ stand for the collection of all $\RR^n$-valued progressively measurable processes on $[0,T]$, and denote by $\HH^{2,n}$ the collection of processes $Z$ in $\HH^{0,n}$ such that $\EE\int_0^T|Z_s|^2ds<\infty$.
We shall also denote by ${\mathbb S}^{2,n}$
the space of all continuous processes ${\boldsymbol S}=(S_{t})_{0 \leq t \leq T}$ 
in $\HH^{0,n}$ 
such that $\EE[ \sup_{0 \leq t \leq T} \vert S_{t} \vert^2] < + \infty$.
Here and in what follows, regularity properties, such as continuity or Lipschitz character, of functions of measures are always understood in the sense of the $2$-Wasserstein distance of these respective spaces of probability measures.

Throughout this section, we assume:
\begin{description}
\item[(I)]The functions $b$, $\sigma$ and 
$f$ are differentiable with respect to $(x,\alpha)$, for $\xi\in\cP_{2}(\RR^d\times\RR^k)$ fixed, and the functions $(x,\alpha,\xi) \mapsto \left(\,
\partial_{x} (b,\sigma,f) (x,\alpha,\xi),
\partial_{\alpha} (b,\sigma,f) (x,\alpha,\xi) \,\right)$ are continuous.
Moreover, the functions $b$, $\sigma$ and $f$ are L-differentiable with respect to the variable $\xi$, the mapping  $$\RR^d\times A \times  L^2(\Omega,\cF,\PP;\RR^d\times \RR^k) \ni (x,\alpha,(X,\beta)) \mapsto \partial_{\mu} (b,\sigma,f)(x,\alpha,\cL(X,\beta))(X,\beta)$$  being continuous. Similarly, the function $g$ is differentiable with respect to $x$, the mapping 
$(x,\mu) \mapsto \partial_{x} g(x,\mu)$ being continuous. The function $g$ is also L-differentiable with respect to the variable $\mu$, and the following map is continuous 
$$
\RR^d \times L^2(\Omega,\cF,\PP;\RR^d) \ni (x,X) \mapsto \partial_{\mu} g(x,\cL(X))(X) \in L^2(\Omega,\cF,\PP;\RR^d).
$$

\item[(II)]The  derivatives $\partial_{x} (b,\sigma)$ and $\partial_{\alpha} (b,\sigma)$ are uniformly bounded, and the mapping $(x',\alpha')\mapsto
\partial_{\mu}(b,\sigma)(x,\alpha,\xi)(x',\alpha')$ (resp. $(x',\alpha') \mapsto
\partial_{\nu}(b,\sigma)(x,\alpha,\xi)(x',\alpha')$) has an $L^2(\RR^d,\mu;\RR^d\times\RR^k)$-norm (resp. $L^2(\RR^k,\nu;\RR^d\times\RR^k)$-norm) which is 
uniformly bounded in $(x,\alpha,\xi)$. There exists a constant $L$ such that, for any $R \geq 0$ and any 
$(x,\alpha,\xi)$ such that $\vert x \vert,\vert \alpha \vert,\|\xi\|_{L^2} \leq R$, it holds that $$\vert \partial_{x} f(x,\alpha,\xi) \vert \vee\vert \partial_{x} g(x,\mu) \vert \vee\vert \partial_{\alpha} f(x,\alpha,\xi) \vert\leq L(1+R),$$ and the norms in $L^2(\RR^d\times\RR^k,\xi;\RR^d\times\RR^k)$ and $L^2(\RR^d,\xi;\RR^d\times\RR^k)$ of 
$(x',\alpha') \mapsto \partial_{\mu}f(x,\alpha,\xi)(x',\alpha')$, $(x',\alpha') \mapsto \partial_{\nu}f(x,\alpha,\xi)(x',\alpha')$ and $x' \mapsto
\partial_{\mu}g(x,\mu)(x')$ are bounded by $L(1+R)$. 
\end{description}

\vskip 4pt\noindent
Under these assumptions, for any admissible control $\balpha\in{\mathbb A}$, we denote by $ \bX=  \bX^{\alpha}$ the corresponding controlled state process satisfying \eqref{fo:state}. We call adjoint processes of $\bX$ (or of the admissible control $\balpha$), {the} couple  $(\bY,\bZ)$ of  stochastic processes $\bY=(Y_t)_{0\le t\le T}$ and $\bZ=(Z_{t})_{0 \leq t \leq T}$ in ${\mathbb S}^{2,d} \times \HH^{2,d\times m}$ satisfying: 
\begin{eqnarray}
\label{fo:adjoint}
\begin{cases}
&dY_t=-
\Bigl[ \partial_xH\bigl(\theta_t,Y_t,Z_t\bigr)
+
\tilde{\mathbb E}\bigl[\partial_\mu  H\bigl(\tilde \theta_t,\tilde Y_t,\tilde Z_t)(X_t,\alpha_t)\bigr]
\Bigr] dt
+ Z_t dW_t, \quad t \in [0,T],
\vspace{5pt}
\\
& Y_T= \partial_xg\bigl(X_T,\cL(X_T)\bigr)+\tilde{\mathbb E}\bigl[\partial_\mu g\bigl(\tilde X_T,\cL(X_T)\bigr)(X_T) \bigr],
\end{cases}
\end{eqnarray}
where we set $\theta_t=(X_t,\alpha_t,\cL(X_t,\alpha_t))$, and the tilde notation refers to an independent copy.
Equation \eqref{fo:adjoint} is referred to as the adjoint equation. Given $\balpha$ and $\bX$, it is a backward stochastic differential equation which is well posed under the current assumptions. 

\subsection{\textbf{A Necessary Condition}}
The main result of this subsection is based on the following expression of the G\^ateaux derivative of the cost function $J(\balpha)$.

\begin{lem}\label{fo:Hgateaux}
{Let $\balpha\in{\mathbb A}$, $\bX$ be the corresponding controlled state process, and $(\bY,\bZ)$ its adjoint processes. 
For $\bbeta\in{\mathbb A}$, the G\^ateaux derivative of $J$ at $\balpha$ in the direction $\bbeta-\balpha$ is
\begin{equation*}
\frac{d}{d\epsilon}J(\balpha+\epsilon(\bbeta-\balpha))\big|_{\epsilon=0} ={\mathbb E} \int_0^T \Bigl( \partial_\alpha H(\theta_t,Y_t,Z_t)
+\tilde {\mathbb E}[\partial_\nu H(\tilde\theta_t,\tilde Y_t,\tilde Z_t)(X_t,\alpha_t)]\Bigr) \cdot (\beta_t-\alpha_t) \; dt,
\end{equation*}
}
where $(\tilde{\bX},\tilde{\bY},\tilde{\bZ},\tilde\balpha,\tilde\bbeta)$ is an independent copy of $(\bX,\bY,\bZ,\balpha,\bbeta)$ on the space $L^2(\tilde{\Omega},\tilde {\mathcal F},\tilde \PP)$.
\end{lem}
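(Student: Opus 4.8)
The plan is to follow the classical variational/duality scheme behind the Pontryagin principle, carefully tracking the extra independent‑copy terms produced by the joint‑law dependence. Since $A$ is closed and convex, $\balpha^\epsilon:=\balpha+\epsilon(\bbeta-\balpha)\in{\mathbb A}$ for $\epsilon\in[0,1]$; I denote by $\bX^\epsilon$ the associated state and keep the abbreviation $\theta_t=(X_t,\alpha_t,\cL(X_t,\alpha_t))$. The first step is to introduce the \emph{variation process} $\bV=(V_t)_{0\le t\le T}$, the solution of the linear McKean--Vlasov SDE obtained by formally differentiating \eqref{fo:state} at $\epsilon=0$,
\begin{align*}
dV_t={}&\bigl(\partial_x b(\theta_t)V_t+\partial_\alpha b(\theta_t)(\beta_t-\alpha_t)+\tilde{\mathbb E}[\partial_\mu b(\theta_t)(\tilde X_t,\tilde\alpha_t)\tilde V_t+\partial_\nu b(\theta_t)(\tilde X_t,\tilde\alpha_t)(\tilde\beta_t-\tilde\alpha_t)]\bigr)dt\\
&+\bigl(\partial_x \sigma(\theta_t)V_t+\partial_\alpha \sigma(\theta_t)(\beta_t-\alpha_t)+\tilde{\mathbb E}[\partial_\mu \sigma(\theta_t)(\tilde X_t,\tilde\alpha_t)\tilde V_t+\partial_\nu \sigma(\theta_t)(\tilde X_t,\tilde\alpha_t)(\tilde\beta_t-\tilde\alpha_t)]\bigr)dW_t,
\end{align*}
with $V_0=0$. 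Its well‑posedness in ${\mathbb S}^{2,d}$ follows from the boundedness of $\partial_{(x,\alpha)}(b,\sigma)$ and of the $L^2$‑norms of $\partial_{\mu}(b,\sigma)$, $\partial_{\nu}(b,\sigma)$ in (II); using the continuity of all these derivatives in (I) together with standard stability estimates for McKean--Vlasov equations, one shows $\frac1\epsilon(\bX^\epsilon-\bX)\to\bV$ in ${\mathbb S}^{2,d}$ as $\epsilon\downarrow0$. I would isolate this convergence as a separate lemma relegated to the Appendix, since it is the technically heaviest ingredient.

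Next I would differentiate $J(\balpha^\epsilon)$ at $\epsilon=0$, interchanging derivative, expectation and time‑integral --- legitimate by the growth bounds on $\partial_{(x,\alpha)}f$, $\partial_xg$ and the L‑derivatives of $f$, $g$ in (II) combined with the uniform $L^2$‑bounds on $(\bX^\epsilon,\bV)$ --- and apply the chain rule for functions of the joint law. Writing $\tilde\theta_t=(\tilde X_t,\tilde\alpha_t,\cL(X_t,\alpha_t))$, this produces
\begin{align*}
\frac{d}{d\epsilon}J(\balpha^\epsilon)\Big|_{\epsilon=0}=\,&{\mathbb E}\int_0^T\Bigl[\partial_x f(\theta_t)\cdot V_t+\partial_\alpha f(\theta_t)\cdot(\beta_t-\alpha_t)\\
&\qquad+\tilde{\mathbb E}\bigl[\partial_\mu f(\tilde\theta_t)(X_t,\alpha_t)\cdot V_t+\partial_\nu f(\tilde\theta_t)(X_t,\alpha_t)\cdot(\beta_t-\alpha_t)\bigr]\Bigr]dt+{\mathbb E}[Y_T\cdot V_T],
\end{align*}
where the terminal condition in \eqref{fo:adjoint} is used to rewrite the terminal contribution ${\mathbb E}\bigl[\bigl(\partial_xg(X_T,\cL(X_T))+\tilde{\mathbb E}[\partial_\mu g(\tilde X_T,\cL(X_T))(X_T)]\bigr)\cdot V_T\bigr]$ as ${\mathbb E}[Y_T\cdot V_T]$.

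The third step is to evaluate ${\mathbb E}[Y_T\cdot V_T]$ via It\^o's formula applied to $t\mapsto Y_t\cdot V_t$ on $[0,T]$ (recall $V_0=0$): I substitute the drift of $\bV$ above and the drift of $\bY$ from \eqref{fo:adjoint}, and the stochastic‑integral terms vanish in expectation by the ${\mathbb S}^{2,d}\times\HH^{2,d\times m}$ integrability of $(\bY,\bZ)$ and the ${\mathbb S}^{2,d}$ bound on $\bV$. The crucial manipulation is the symmetrization of the mean‑field cross terms: a term such as ${\mathbb E}\,\tilde{\mathbb E}\bigl[Y_t\cdot\partial_\mu b(\theta_t)(\tilde X_t,\tilde\alpha_t)\tilde V_t\bigr]$ is rewritten, using Fubini and the exchangeability of $(\bX,\bY,\bZ,\balpha,\bbeta)$ with its tilde copy, as ${\mathbb E}\,\tilde{\mathbb E}\bigl[\tilde Y_t\cdot\partial_\mu b(\tilde\theta_t)(X_t,\alpha_t) V_t\bigr]={\mathbb E}\bigl[V_t\cdot\tilde{\mathbb E}[(\partial_\mu b(\tilde\theta_t)(X_t,\alpha_t))^{\!\top}\tilde Y_t]\bigr]$, and analogously for $\sigma$ and for the $\partial_\nu$ terms. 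Since $H=b\cdot y+\sigma\cdot z+f$, these re‑expressed terms precisely match the $b$‑ and $\sigma$‑components of $\partial_xH(\theta_t,Y_t,Z_t)$ and of $\tilde{\mathbb E}[\partial_\mu H(\tilde\theta_t,\tilde Y_t,\tilde Z_t)(X_t,\alpha_t)]$ coming from the adjoint drift; after the cancellations the only $V_t$‑terms that survive are $-V_t\cdot\partial_xf(\theta_t)$ and $-V_t\cdot\tilde{\mathbb E}[\partial_\mu f(\tilde\theta_t)(X_t,\alpha_t)]$, which annihilate the corresponding terms of the previous display, while the surviving $(\beta_t-\alpha_t)$‑terms assemble into $\bigl(\partial_\alpha H(\theta_t,Y_t,Z_t)+\tilde{\mathbb E}[\partial_\nu H(\tilde\theta_t,\tilde Y_t,\tilde Z_t)(X_t,\alpha_t)]-\partial_\alpha f(\theta_t)-\tilde{\mathbb E}[\partial_\nu f(\tilde\theta_t)(X_t,\alpha_t)]\bigr)\cdot(\beta_t-\alpha_t)$. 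Adding back the $\partial_\alpha f$ and $\partial_\nu f$ contributions of the cost derivative yields exactly the claimed identity.

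The hard part will be twofold. First, the rigorous justification of the variation‑process limit $\frac1\epsilon(\bX^\epsilon-\bX)\to\bV$ in ${\mathbb S}^{2,d}$ and of the differentiation under the integral sign: although routine in spirit, it genuinely relies on assumptions (I)--(II) and on the $2$‑Wasserstein continuity of the coefficients, so it is best isolated in a preliminary lemma. Second, the careful bookkeeping of the independent‑copy terms --- keeping track of which variables carry the tilde, of the direction in which each L‑derivative acts ($\partial_\mu$ pairing with the $\bX$‑variation $\bV$, $\partial_\nu$ with $\bbeta-\balpha$), and of the Fubini/exchangeability swap that converts ``primed'' into ``unprimed'' terms --- which is what makes the duality between the linearization of \eqref{fo:state} and the adjoint equation go through.
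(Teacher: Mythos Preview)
Your proposal is correct and follows essentially the same route as the paper's proof: introduce the variation process $\bV$, differentiate $J$, compute ${\mathbb E}[Y_T\cdot V_T]$ by It\^o/integration by parts, and use the Fubini/exchangeability swap on the independent-copy terms to effect the cancellations. The only notable differences are cosmetic: the paper simplifies to $\sigma\equiv I_d$ and $g\equiv 0$ (referring to \cite[Section 6.3]{CarmonaDelarue_book_vol_I} for the general case and technical details), whereas you keep the full $\sigma$ and $g$ and are more explicit about isolating the convergence $\frac1\epsilon(\bX^\epsilon-\bX)\to\bV$ in ${\mathbb S}^{2,d}$ as a separate lemma; also, in your expression for $\frac{d}{d\epsilon}J|_{\epsilon=0}$ you have already applied the Fubini swap to the $\partial_\mu f$ and $\partial_\nu f$ terms (writing $\tilde{\mathbb E}[\partial_\mu f(\tilde\theta_t)(X_t,\alpha_t)\cdot V_t]$ rather than $\tilde{\mathbb E}[\partial_\mu f(\theta_t)(\tilde X_t,\tilde\alpha_t)\cdot\tilde V_t]$), which is equivalent after taking the outer expectation but slightly reorders the bookkeeping relative to the paper.
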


\begin{proof}
We follow the lines of the proof of the stochastic maximum principle for the control of McKean-Vlasov equations given in Section 6.3 of \cite{CarmonaDelarue_book_vol_I}. To lighten the notation, we only give the details in the case $\sigma\equiv I_d$ and $g\equiv 0$.
Given admissible $\balpha$ and $\bbeta$, for each $\epsilon>0$ we define the admissible control  $\balpha^\epsilon=(\alpha^\epsilon_t)_{0\le t\le T}$ by {$\alpha^\epsilon_t=\alpha_t+\epsilon(\beta_t-\alpha_t)$}, and we denote by $\bX^\epsilon=(X^\epsilon_t)_{0\le t\le T}$ the solution of the state equation \eqref{fo:state} for the control $\balpha^\epsilon$ in lieu of $\balpha$. We then define the process $\bV=(V_t)_{0\le t\le T}$ as the G\^ateaux derivative of the state in the direction {$\eeta:=\bbeta-\balpha$}.
In other words, we define $V_t:=\lim_{\epsilon\searrow 0}\epsilon^{-1}(X^\epsilon_t-X_t)$. It is easy to check that this process satisfies:
$$
dV_t=\Bigl( \partial_x b(\theta_t)V_t +\partial_\alpha b(\theta_t)\eta_t + \tilde{\mathbb E}[\partial_\mu b(\theta_t)(\tilde X_t,\tilde\alpha_t)\tilde V_t] +
\tilde{\mathbb E}[\partial_\nu b(\theta_t)(\tilde X_t,\tilde\alpha_t)\tilde \eta_t]\Bigr)\,dt,
$$
with zero initial condition. In the same way we obtain:
\begin{eqnarray*}
&&\lim_{\epsilon\searrow 0}\frac{1}{\epsilon}[J(\balpha^\epsilon)-J(\balpha)]\\
&&\hskip 12pt
={\mathbb E}\int_0^T\Bigl( \partial_x f(\theta_t)V_t +\partial_\alpha f(\theta_t)\eta_t + \tilde{\mathbb E}[\partial_\mu f(\theta_t)(\tilde X_t,\tilde\alpha_t)\tilde V_t] +
\tilde{\mathbb E}[\partial_\nu f(\theta_t)(\tilde X_t,\tilde\alpha_t)\tilde \eta_t]\Bigr)\,dt\\
&&\hskip 12pt
={\mathbb E}\int_0^T\Bigl( \partial_x f(\theta_t)V_t +\partial_\alpha H(\theta_t,Y_t)\eta_t -\partial_\alpha b(\theta_t)Y_t\eta_t\\
&&\hskip 75pt
+ \tilde{\mathbb E}[\partial_\mu f(\theta_t)(\tilde X_t,\tilde\alpha_t)\tilde V_t] +
\tilde{\mathbb E}[\partial_\nu f(\theta_t)(\tilde X_t,\tilde\alpha_t)\tilde \eta_t]\Bigr)\,dt,
\end{eqnarray*}
where we used the definition of the Hamiltonian. In the present situation, $H$ is independent of $Z$ because we assume $\sigma$ is not controlled. To be more specific, here:
$$
H(x,\alpha,\xi,y)=b(x,\alpha,\xi)\cdot y  + f(x,\alpha,\xi).
$$
Note that, at this stage, the process $\bY=(Y_t)_{0\le t\le T}$ could be any process. In what follows, we choose this process to be the adjoint process of the control $\balpha$. In this case, if we use the form \eqref{fo:adjoint} of the adjoint equation, we get:
\begin{eqnarray*}
0&=&{\mathbb E}[Y_TV_T]={\mathbb E}\int_0^TY_t\,dV_t + {\mathbb E}\int_0^T V_t\,dY_t\\
&=&{\mathbb E}\int_0^T\Bigl( Y_t\partial_x b(\theta_t)V_t +Y_t\partial_\alpha b(\theta_t)\eta_t + Y_t\tilde{\mathbb E}[\partial_\mu b(\theta_t)(\tilde X_t,\tilde\alpha_t)\tilde V_t] +
Y_t\tilde{\mathbb E}[\partial_\nu b(\theta_t)(\tilde X_t,\tilde\alpha_t)\tilde \eta_t]\Bigr)\,dt\\
&&\hskip 25pt
-V_t\partial_x b(\theta_t)Y_t -V_t\partial_x f(\theta_t) 
-V_t\tilde{\mathbb E}[\partial_\mu b(\tilde\theta_t)(X_t,\alpha_t)\tilde Y_t] 
-V_t\tilde{\mathbb E}[\partial_\mu f(\tilde\theta_t)(X_t,\alpha_t)]\Bigr)\,dt\\
&=&{\mathbb E}\int_0^T\Bigl( Y_t\partial_\alpha b(\theta_t)\eta_t + Y_t\tilde{\mathbb E}[\partial_\mu b(\theta_t)(\tilde X_t,\tilde\alpha_t)\tilde V_t] +
Y_t\tilde{\mathbb E}[\partial_\nu b(\theta_t)(\tilde X_t,\tilde\alpha_t)\tilde \eta_t]\Bigr)\,dt\\
&&\hskip 25pt
-V_t\partial_x f(\theta_t) 
-V_t\tilde{\mathbb E}[\partial_\mu b(\tilde\theta_t)(X_t,\alpha_t)\tilde Y_t] 
-V_t\tilde{\mathbb E}[\partial_\mu f(\tilde\theta_t)(X_t,\alpha_t)]\Bigr)\,dt.
\end{eqnarray*}
From this we derive
\begin{eqnarray*}
{\mathbb E}\int_0^T\Bigl( \partial_x f(\theta_t)V_t  -\partial_\alpha b(\theta_t)Y_t\eta_t\Bigr)dt
&=&
{\mathbb E}\int_0^T\Bigl(  Y_t\tilde{\mathbb E}[\partial_\mu b(\theta_t)(\tilde X_t,\tilde\alpha_t)\tilde V_t] +
Y_t\tilde{\mathbb E}[\partial_\nu b(\theta_t)(\tilde X_t,\tilde\alpha_t)\tilde \eta_t]\Bigr)\,dt\\
&&\hskip 25pt
-V_t\tilde{\mathbb E}[\partial_\mu b(\tilde\theta_t)(X_t,\alpha_t)\tilde Y_t] 
-V_t\tilde{\mathbb E}[\partial_\mu f(\tilde\theta_t)(X_t,\alpha_t)]\Bigr)\,dt.
\end{eqnarray*}
Substituting this in the last expression we found for the G\^ateaux derivative of $J$, we get:
\begin{eqnarray*}
\lim_{\epsilon\searrow 0}\frac{1}{\epsilon}[J(\balpha^\epsilon)-J(\balpha)] 
&=&{\mathbb E}\int_0^T\Bigl(\partial_\alpha H(\theta_t,Y_t)\eta_t + Y_t\tilde{\mathbb E}[\partial_\mu b(\theta_t)(\tilde X_t,\tilde\alpha_t)\tilde V_t] +
Y_t\tilde{\mathbb E}[\partial_\nu b(\theta_t)(\tilde X_t,\tilde\alpha_t)\tilde \eta_t]\\
&&\,\,\,\,\,\,\,\,\,\,
-V_t\tilde{\mathbb E}[\partial_\mu b(\tilde\theta_t)(X_t,\alpha_t)\tilde Y_t] 
-V_t\tilde{\mathbb E}[\partial_\mu f(\tilde\theta_t)(X_t,\alpha_t)] \\
&&\,\,\,\,\,\,\,\,\,\, + \tilde{\mathbb E}[\partial_\mu f(\theta_t)(\tilde X_t,\tilde\alpha_t)\tilde V_t] +
\tilde{\mathbb E}[\partial_\nu f(\theta_t)(\tilde X_t,\tilde\alpha_t)\tilde \eta_t]\Bigr)\,dt\\
&=&{\mathbb E}\int_0^T\Bigl(\partial_\alpha H(\theta_t,Y_t)\eta_t +
\tilde{\mathbb E}[\partial_\nu H(\theta_t,Y_t)(\tilde X_t,\tilde\alpha_t)\tilde \eta_t]\Bigr)\,dt\\
&&\,\,\,\,\,\,\,\,\,\,+\int_0^T\Bigl( -{\mathbb E}\tilde{\mathbb E}[V_t\partial_\mu b(\tilde\theta_t)(X_t,\alpha_t)\tilde Y_t] 
+ {\mathbb E}\tilde{\mathbb E}[Y_t\partial_\mu b(\theta_t)(\tilde X_t,\tilde\alpha_t)\tilde V_t]\Bigr)\,dt \\
&&\,\,\,\,\,\,\,\,\,\,
+ \int_0^T\Bigl({\mathbb E}\tilde{\mathbb E}[\partial_\mu f(\theta_t)(\tilde X_t,\tilde\alpha_t)\tilde V_t] 
-{\mathbb E}\tilde{\mathbb E}[V_t\partial_\mu f(\tilde\theta_t)(X_t,\alpha_t)]\Bigr)\,dt,
\end{eqnarray*}
which gives the desired result because the integrals on the last two lines are both equal to $0$, given that the tilde notations signify independent copies.
\end{proof}

\vskip 6pt
We are now ready to prove the necessary part of the Pontryagin stochastic maximum principle. {In the present framework of extended Mean Field control we obtain \eqref{fo:Pontryagin_sufficient} below.  It is not possible to improve this condition into a pointwise minimization condition as in more classical versions of the problem when there is no non-linear dependence on the law of the control{, see (6.58) in \cite{CarmonaDelarue_book_vol_I}}. We give an example of this phenomenon in  Remark \ref{rem example}. 
}

\begin{thm}
\label{thm:converse:pontryagin}
Under Assumptions (I)-(II), if the admissible control 
$\balpha=(\alpha_{t})_{0 \leq t \leq T} \in{\mathbb A}$ is optimal,  $\bX=(X_{t})_{0 \leq t \leq T}$ is the associated controlled state given by \eqref{fo:state}, and  
$(\bY,\bZ)=( Y_{t},  Z_{t})_{0 \leq t \leq T}$ are the associated adjoint processes satisfying \eqref{fo:adjoint}, then we have:
\begin{equation}
\label{fo:Pontryagin_sufficient}
\left (\partial_{\alpha} H(\theta_t,Y_t,Z_t)+
\tilde\EE \bigl[ \partial_{\nu} H(\tilde\theta_t,\tilde Y_t,\tilde Z_t) (X_t,\alpha_t)\bigr]\right ) \cdot (\alpha_t-a)\leq \,0\quad  \forall a\in A, \,\textrm{\rm Leb}_{1}
 \otimes \PP \;\text{a.e.},
\end{equation}
where $(\tilde{\bX},\tilde{\bY},\tilde{\bZ},\tilde\balpha)$ is an independent copy of $(\bX,\bY,\bZ,\balpha)$ on $L^2(\tilde{\Omega},\tilde {\mathcal F},\tilde \PP)$.
\end{thm}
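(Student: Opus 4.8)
The plan is to differentiate the cost along convex perturbations of the optimal control and then localize in $(t,\omega)$. The first observation is that ${\mathbb A}$ is convex: if $\balpha,\bbeta\in{\mathbb A}$, then $\balpha+\epsilon(\bbeta-\balpha)=(1-\epsilon)\balpha+\epsilon\bbeta$ is progressively measurable, takes values in the closed-convex set $A$, and satisfies the $L^2$ integrability bound, for every $\epsilon\in[0,1]$. Hence, with $\balpha$ optimal and $\bbeta\in{\mathbb A}$ arbitrary, the function $\epsilon\mapsto J(\balpha+\epsilon(\bbeta-\balpha))$ is defined on $[0,1]$ and minimized at $\epsilon=0$, so its (one-sided) derivative there is nonnegative; by Lemma~\ref{fo:Hgateaux} this reads
\begin{equation*}
\EE\int_0^T \Psi_t\cdot(\beta_t-\alpha_t)\,dt\ \ge\ 0\qquad\text{for every }\bbeta\in{\mathbb A},
\end{equation*}
where I abbreviate $\Psi_t:=\partial_\alpha H(\theta_t,Y_t,Z_t)+\tilde\EE[\partial_\nu H(\tilde\theta_t,\tilde Y_t,\tilde Z_t)(X_t,\alpha_t)]$.

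Next I would check, using Assumptions (I)--(II) together with $\balpha\in{\mathbb A}$, $\bX\in{\mathbb S}^{2,d}$ and $(\bY,\bZ)\in{\mathbb S}^{2,d}\times\HH^{2,d\times m}$, that $\Psi$ is a progressively measurable, $\RR^k$-valued process lying in $\HH^{2,k}$; the term carrying the partial L-derivative in $\nu$ is controlled by Fubini on the product space and the uniform $L^2(\nu)$-bounds of Assumption (II). Granting this, fix $\bbeta\in{\mathbb A}$ and an arbitrary progressively measurable set $B\subset[0,T]\times\Omega$; the control $\bbeta^B:=\balpha\,\ind_{B^c}+\bbeta\,\ind_B$ again belongs to ${\mathbb A}$, so inserting it into the displayed inequality yields $\EE\int_0^T\ind_B(t,\omega)\,\Psi_t\cdot(\beta_t-\alpha_t)\,dt\ge0$. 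Choosing $B=\{(t,\omega):\Psi_t\cdot(\beta_t-\alpha_t)<0\}$ (which is progressively measurable) forces it to be $\mathrm{Leb}_1\otimes\PP$-null, that is,
\begin{equation*}
\Psi_t\cdot(\beta_t-\alpha_t)\ \ge\ 0\qquad\mathrm{Leb}_1\otimes\PP\text{-a.e.},\ \text{for each fixed }\bbeta\in{\mathbb A}.
\end{equation*}

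To upgrade this to the statement \eqref{fo:Pontryagin_sufficient}, which must hold for \emph{all} $a\in A$ off a single null set, I would use that $A\subset\RR^k$ is separable: pick a countable dense set $\{a_j\}_j\subset A$, apply the last display to the constant admissible controls $\bbeta\equiv a_j$, and discard the countable union of the resulting null sets. Off that single null set one has $\Psi_t\cdot(a_j-\alpha_t)\ge0$ for every $j$, and since $a\mapsto\Psi_t\cdot(a-\alpha_t)$ is continuous, $\Psi_t\cdot(a-\alpha_t)\ge0$ for all $a\in A$, i.e.\ $\Psi_t\cdot(\alpha_t-a)\le0$ for all $a\in A$, which is exactly \eqref{fo:Pontryagin_sufficient}.

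The genuine obstacle is not the variational scheme, which is classical once ${\mathbb A}$ is convex, but the regularity/integrability bookkeeping underpinning Lemma~\ref{fo:Hgateaux} and the measurable manipulations above: one must verify that $(t,\omega)\mapsto\tilde\EE[\partial_\nu H(\tilde\theta_t,\tilde Y_t,\tilde Z_t)(X_t,\alpha_t)]$ is a bona fide progressively measurable process with $\EE\int_0^T|\Psi_t|^2\,dt<\infty$, which rests on the continuity of the partial L-derivatives in Assumption (I), the uniform bounds and linear-growth controls in Assumption (II), and the square-integrability of $\bX,\balpha,\bY,\bZ$.
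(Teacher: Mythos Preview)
Your proof is correct and follows essentially the same strategy as the paper: invoke Lemma~\ref{fo:Hgateaux} to get $\EE\int_0^T\Psi_t\cdot(\beta_t-\alpha_t)\,dt\ge 0$ from optimality, then localize. The paper localizes first in time (restricting the perturbation to $[t,t+\varepsilon]$) and then in $\omega$ (restricting to sets in $\mathcal{F}_t$), whereas you localize jointly in $(t,\omega)$ via progressively measurable sets and then pass to all $a\in A$ through a countable dense subset; your route is arguably cleaner and makes explicit the single-null-set conclusion that the paper leaves implicit.
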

\begin{proof}
Given any admissible control $\bbeta$, we use as before the perturbation $\alpha^\epsilon_t=\alpha_t+\epsilon (\beta_t-\alpha_t)$.
Since $\balpha$ is optimal, we have the inequality
$$\textstyle
\frac{d}{d\epsilon}J(\balpha+\epsilon(\bbeta-\balpha))\big|_{\epsilon=0} \ge 0.
$$
Using the result of the previous lemma, we get:
\begin{equation*}\label{eq:first}\textstyle
{\mathbb E} \int_0^T \Bigl( \partial_\alpha H(\theta_t,Y_t,Z_t)
+\tilde {\mathbb E}[\partial_\nu H(\tilde\theta_t,\tilde Y_t,\tilde Z_t)(X_t,\alpha_t)]\Bigr)\cdot ( \beta_t -\alpha_t) \; dt \ge 0.
\end{equation*}
We now use the same argument as in the classical case (see e.g. \cite[Theorem 6.14]{CarmonaDelarue_book_vol_I}). For every $t$ and $\mathcal{F}_t$-measurable $\beta$ with values in $A$, we can take {$\beta_t$
equal to $\alpha_t$}
except for the interval $[t,t+\varepsilon]$ where it equals $\beta$, obtaining
\begin{equation}\label{eq:not first}\textstyle
{\mathbb E} \Bigl( \partial_\alpha H(\theta_t,Y_t,Z_t)
+\tilde {\mathbb E}[\partial_\nu H(\tilde\theta_t,\tilde Y_t,\tilde Z_t)(X_t,\alpha_t)]\cdot ( \beta -\alpha_t)  \Bigr)\ge 0.
\end{equation}
Further, for any $a\in A$ we can take $\beta$ to be equal to $a$ on an arbitrary set in $\mathcal{F}_t$, and to coincide with $\alpha_t$ otherwise, establishing equation \eqref{fo:Pontryagin_sufficient}.
\end{proof}

\begin{rem}
If $A$ is open, or if the admissible optimal control $\balpha$ takes values in the interior of $A$, then we may replace \eqref{fo:Pontryagin_sufficient} with the following (see e.g.\ \cite[Proposition 6.15]{CarmonaDelarue_book_vol_I}):
\begin{equation}\label{fo:Pontryagin_sufficient0}
\partial_{\alpha} H(\theta_t,Y_t,Z_t)+ \tilde\EE[\partial_{\nu} H(\tilde\theta_t,\tilde Y_t,\tilde Z_t)(X_t,\alpha_t)]=0 { \qquad \,\,\textrm{\rm Leb}_{1}
 \otimes \PP \;\text{a.e.}}.
 \end{equation}
\end{rem}
\begin{rem}
A sharpening of \eqref{fo:Pontryagin_sufficient} can be obtained  under the convexity condition:
\begin{equation*}
\label{fo:H_convexity_a}
\begin{split}
H(x,a',\xi',Y_t, Z_t) &\geq H(x,a,\xi,Y_t,Z_t) 
+
\partial_{\alpha} H(x,a,\xi,Y_t,Z_t) \cdot (a'-a)
 \\  
&\hspace{45pt} +
\tilde\EE \bigl[ \partial_{\nu} H(x,a,\xi,Y_t,Z_t) (\tilde X_t,\tilde\alpha_t)
\cdot (\tilde \alpha'_t- \tilde \alpha_t) \bigr],
\end{split}
\end{equation*}
$\textrm{\rm Leb}_{1}
 \otimes \PP \;\text{almost everywhere}$,
for all $x\in\RR^d$, $a,a'\in A$, and $\xi,\xi'\in{\mathcal P}_{2}(\RR^d\times A)$ with $\xi=\cL(\tilde X_t,\tilde\alpha_t)$ and $\xi'=\cL(\tilde X_t,\tilde\alpha'_t)$. Indeed, under this condition and  from \eqref{eq:not first}, we get
$\EE \left [ H(X_t,\beta,\mathcal{L}(X_t,\beta),Y_t,Z_t) \right ] \geq \EE \left [ H(X_t,\alpha_t,\mathcal{L}(X_t,\alpha_t),Y_t,Z_t) \right ]  $ in the above proof, so
\begin{align*}
\label{eq strange}
\alpha_t = \text{argmin}\,\,\left\{\,\, \EE \left [ H(X_t,\beta,\mathcal{L}(X_t,\beta),Y_t,Z_t) \right ]\,:\,\, \beta\in L^2(\Omega,\mathcal{F}_t,\PP{; A}) \,\, \right\}.
\end{align*}
\end{rem}

\subsection{\textbf{A Sufficient Condition}}
\label{sub:mkv_sufficient}
Guided by the necessary condition proven above, we derive a sufficient condition for opti\-mality in the same spirit, though under stronger convexity assumptions. These read as
\begin{equation}
\label{fo:g_convexity}
g(x,\mu)-g(x',\mu')\le  \partial_xg(x,\mu)\cdot (x-x')+\tilde{\mathbb E} \bigl[\partial_\mu g(x,\mu)(\tilde X)\cdot (\tilde X-\tilde {X'})\bigr],
\end{equation}
for all $x,x'\in\RR^d$, and $\mu,\mu'\in{\mathcal P}_{2}(\RR^d)$, with $\mu=\cL(\tilde X)$ and $\mu'=\cL(\tilde X')$,
see \cite[Ch.6]{CarmonaDelarue_book_vol_I}, and
\begin{equation}
\label{fo:H_convexity}
\begin{split}
H(x',a',\xi',y,z) &\geq H(x,a,\xi,y,z) 
 +  \partial_{x} H(x,a,\xi,y,z) \cdot (x'-x)
+
\partial_{\alpha} H(x,a,\xi,y,z) \cdot (a'-a)
 \\ 
& +
\tilde\EE \bigl[ \partial_{\mu} H(x,a,\xi,y,z) (\tilde X,\tilde\alpha)
\cdot (\tilde X'- \tilde X) + \partial_{\nu} H(x,a,\xi,y,z) (\tilde X,\tilde\alpha)
\cdot (\tilde \alpha'- \tilde \alpha) \bigr],
\end{split}
\end{equation}
for all $x,x'\in\RR^d$, $a,a'\in A$, $y\in\RR^d$, $z\in\RR^{d\times m}$, and $\xi,\xi'\in{\mathcal P}_{2}(\RR^d\times A)$ with $\xi =  \mathcal{L} (\tilde X,\tilde \alpha)$ and $\xi' =  \mathcal{L} (\tilde X',\tilde \alpha')$.

\begin{thm}
\label{th:pontryagin}
Under Assumptions (I)-(II), let $\balpha=(\alpha_{t})_{0 \leq t \leq T}\in{\mathbb A}$ be an admissible control, 
$\bX= (X_{t})_{0 \leq t \leq T}$ the corresponding controlled state process, and $(\bY,\bZ)=(Y_{t}, Z_{t})_{0 \leq t \leq T}$ the corresponding adjoint processes.
Let us  assume that:
\begin{enumerate}
\item $g$ is convex in the sense of \eqref{fo:g_convexity};
\item $\RR^d \times A\times {\mathcal P}_{2}(\RR^d\times A)  \ni (x,\alpha,\xi)\mapsto H(x,\alpha,\xi,Y_t,Z_t)$ is convex in the sense of \eqref{fo:H_convexity},
$\textrm{\rm Leb}_{1}
 \otimes \PP \;\text{almost everywhere}$.
\end{enumerate} 
Then, if
\eqref{fo:Pontryagin_sufficient} holds,
$\balpha$ is an optimal control, i.e.
$J(\balpha)=\inf_{ \balpha'\in{\mathbb A}}J( \balpha')
$. 
\end{thm}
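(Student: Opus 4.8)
The plan is to run the standard convexity/duality argument for the stochastic maximum principle, adapted to the extended setting where the independent-copy terms $\partial_\mu H$ and $\partial_\nu H$ both appear. Fix an arbitrary competitor $\bbeta\in\mathbb A$ with controlled state $\bX'=\bX^\beta$, and write
\[
J(\balpha)-J(\bbeta)={\mathbb E}\int_0^T\bigl[f(\theta_t)-f(\theta'_t)\bigr]dt+{\mathbb E}\bigl[g(X_T,\cL(X_T))-g(X'_T,\cL(X'_T))\bigr],
\]
with $\theta_t=(X_t,\alpha_t,\cL(X_t,\alpha_t))$ and $\theta'_t=(X'_t,\beta_t,\cL(X'_t,\beta_t))$. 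First I would bound the terminal difference from above using the convexity assumption \eqref{fo:g_convexity}, obtaining $g(X_T,\cL(X_T))-g(X'_T,\cL(X'_T))\le \partial_x g(X_T,\cL(X_T))\cdot(X_T-X'_T)+\tilde{\mathbb E}[\partial_\mu g(\tilde X_T,\cL(X_T))(X_T)\cdot(X_T-X'_T)]$ after using the Fubini/independent-copy interchange to move the tilde, so that in expectation this is exactly ${\mathbb E}[Y_T\cdot(X_T-X'_T)]$ by the terminal condition in \eqref{fo:adjoint}.

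Next I would compute ${\mathbb E}[Y_T\cdot(X_T-X'_T)]$ by Itô's formula applied to the product $Y_t\cdot(X_t-X'_t)$ (both processes start at the same $X_0$, so the $t=0$ term vanishes), using the forward dynamics \eqref{fo:state} for $\bX$ and $\bX'$ and the backward dynamics \eqref{fo:adjoint} for $\bY$. The drift terms produce, after taking expectations, an integral of
\[
-\bigl(\partial_x H(\theta_t,Y_t,Z_t)+\tilde{\mathbb E}[\partial_\mu H(\tilde\theta_t,\tilde Y_t,\tilde Z_t)(X_t,\alpha_t)]\bigr)\cdot(X_t-X'_t)
+Y_t\cdot\bigl(b(\theta_t)-b(\theta'_t)\bigr)+Z_t\cdot\bigl(\sigma(\theta_t)-\sigma(\theta'_t)\bigr),
\]
where I have used $\partial_x H\cdot(X_t-X'_t)$ absorbing also the $\partial_\alpha f$-independent pieces — more precisely, recalling $H=b\cdot y+\sigma\cdot z+f$, the terms $Y_t\cdot(b(\theta_t)-b(\theta'_t))+Z_t\cdot(\sigma(\theta_t)-\sigma(\theta'_t))=H(\theta_t,Y_t,Z_t)-H(\theta'_t,Y_t,Z_t)-(f(\theta_t)-f(\theta'_t))$, which is how the $f$-difference in $J(\balpha)-J(\bbeta)$ gets cancelled. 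Collecting everything, $J(\balpha)-J(\bbeta)$ equals the expected time-integral of
\[
H(\theta_t,Y_t,Z_t)-H(\theta'_t,Y_t,Z_t)-\partial_x H(\theta_t,Y_t,Z_t)\cdot(X_t-X'_t)-\tilde{\mathbb E}[\partial_\mu H(\tilde\theta_t,\tilde Y_t,\tilde Z_t)(X_t,\alpha_t)]\cdot(X_t-X'_t),
\]
where I must be careful that, after the Fubini interchange on the last term, ${\mathbb E}\tilde{\mathbb E}[\partial_\mu H(\tilde\theta_t,\tilde Y_t,\tilde Z_t)(X_t,\alpha_t)\cdot(X_t-X'_t)]={\mathbb E}\tilde{\mathbb E}[\partial_\mu H(\theta_t,Y_t,Z_t)(\tilde X_t,\tilde\alpha_t)\cdot(\tilde X_t-\tilde X'_t)]$ — relabelling the copies — so it matches the $\partial_\mu$ term in the convexity inequality \eqref{fo:H_convexity} applied with $\xi=\cL(X_t,\alpha_t)$, $\xi'=\cL(X'_t,\beta_t)$.

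Now I would invoke convexity of $H$ in $(x,\alpha,\xi)$, \eqref{fo:H_convexity}: with $(x,a,\xi)=\theta_t$ and $(x',a',\xi')=\theta'_t$ it gives
\[
H(\theta'_t,Y_t,Z_t)\ge H(\theta_t,Y_t,Z_t)+\partial_x H(\theta_t,\cdot)\cdot(X'_t-X_t)+\partial_\alpha H(\theta_t,\cdot)\cdot(\beta_t-\alpha_t)+\tilde{\mathbb E}[\partial_\mu H(\theta_t,\cdot)(\tilde X_t,\tilde\alpha_t)\cdot(\tilde X'_t-\tilde X_t)+\partial_\nu H(\theta_t,\cdot)(\tilde X_t,\tilde\alpha_t)\cdot(\tilde\beta_t-\tilde\alpha_t)],
\]
so that, after taking expectations and using the copy-relabelling identity above for the $\partial_\mu$ term, $J(\balpha)-J(\bbeta)\le -{\mathbb E}\int_0^T\bigl(\partial_\alpha H(\theta_t,Y_t,Z_t)+\tilde{\mathbb E}[\partial_\nu H(\tilde\theta_t,\tilde Y_t,\tilde Z_t)(X_t,\alpha_t)]\bigr)\cdot(\beta_t-\alpha_t)\,dt$, again relabelling the copies in the $\partial_\nu$ term exactly as done in Lemma~\ref{fo:Hgateaux}. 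Finally, condition \eqref{fo:Pontryagin_sufficient} applied with $a=\beta_t$ (which is $\mathcal F_t$-measurable and $A$-valued) says precisely that the integrand on the right is $\le 0$ $\mathrm{Leb}_1\otimes\PP$-a.e., hence $J(\balpha)-J(\bbeta)\le 0$. Since $\bbeta\in\mathbb A$ was arbitrary, $\balpha$ is optimal.

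\textbf{Main obstacle.} The routine parts (Itô on $Y_t\cdot(X_t-X'_t)$, the $H$-versus-$f$ bookkeeping) are standard; the delicate point, exactly as in Lemma~\ref{fo:Hgateaux}, is the repeated use of Fubini together with careful relabelling of the independent copies so that the $\partial_\mu$ contribution coming from the adjoint drift matches the $\partial_\mu$ contribution in the convexity inequality, and likewise that the $\partial_\nu$ term ends up in the form ${\tilde{\mathbb E}}[\partial_\nu H(\tilde\theta_t,\tilde Y_t,\tilde Z_t)(X_t,\alpha_t)]$ appearing in \eqref{fo:Pontryagin_sufficient}. One also needs the integrability afforded by Assumptions (I)--(II) (the $L^2$ bounds on the L-derivatives, the bounded derivatives of $b,\sigma$, and $\balpha,\bbeta\in\mathbb A$, $\bY\in\mathbb S^{2,d}$, $\bZ\in\mathbb H^{2,d\times m}$) to justify that all the stochastic integrals are true martingales with zero expectation and that every Fubini interchange is licit; these are the same estimates underlying the well-posedness of \eqref{fo:adjoint} and can be quoted from Section 6.3 of \cite{CarmonaDelarue_book_vol_I}.
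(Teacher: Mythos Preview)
Your proposal is correct and follows essentially the same route as the paper's proof: split $J(\balpha)-J(\bbeta)$ into the terminal and running parts, bound the terminal part via \eqref{fo:g_convexity} and the terminal condition of \eqref{fo:adjoint}, expand $\mathbb{E}[Y_T\cdot(X_T-X'_T)]$ by It\^o's formula, use the Fubini/relabelling of independent copies to match the $\partial_\mu H$ terms, apply \eqref{fo:H_convexity}, and finish with \eqref{fo:Pontryagin_sufficient} (applied at the random point $a=\beta_t$, which is legitimate once the pointwise inequality is upgraded to hold for all $a\in A$ on a single $\mathrm{Leb}_1\otimes\PP$-full set). The only cosmetic remark is the sign bookkeeping in your last sentence: from \eqref{fo:Pontryagin_sufficient} one gets $(\partial_\alpha H+\tilde{\mathbb E}[\partial_\nu H])\cdot(\beta_t-\alpha_t)\ge 0$, so it is the whole right-hand side (with the minus sign) that is $\le 0$, not the bare integrand.
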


As before, we use the notation $\theta_t=(X_t,\alpha_t,\cL(X_t,\alpha_t))$ throughout the proof.
 
\begin{proof}
We follow the steps of the classical proofs; see for example \cite[Theorem 6.16]{CarmonaDelarue_book_vol_I} for the case of the control of standard McKean-Vlasov SDEs. Let $\balpha'\in{\mathbb A}$ be any admissible control, and $ \bX'= \bX^{\balpha'}$ the corresponding controlled state.
By definition of the objective function and the Hamiltonian of the control problem, we have:
\begin{align}
\label{fo:objective_diff}
J(\balpha)-J(\balpha')
&= \textstyle {\mathbb E} \bigl[g(X_T,\cL(X_T))-g(X'_T,\cL(X'_T)) \bigr]+ {\mathbb E}\int_0^T
\bigl[f(\theta_t)-f(\theta_t') \bigr]dt\nonumber
\\
&= \textstyle{\mathbb E} \bigl[g(X_T,\cL(X_T))-g(X'_T,\cL(X'_T)) \bigr] +{\mathbb E}\int_0^T \bigl[H(\theta_t,Y_t,Z_t)-H(\theta_t',Y_t,Z_t) \bigr]dt\nonumber
\\
&\textstyle \phantom{????????}- {\mathbb E}\int_0^T \bigl\{ \bigl[b(\theta_t)-b(\theta_t') \bigr]\cdot Y_t + 
\bigl[\sigma(\theta_t)-\sigma(\theta'_t)]\cdot Z_t \bigr\} dt,
\end{align}
with $\theta'_{t} = (X'_{t},\alpha'_{t},\cL(X'_t,\alpha'_t))$. Being $g$ convex, we have: 
\begin{equation}
\label{fo:g}
\begin{split}
&{\mathbb E}\bigl[g\bigl(X_T,\cL(X_T)\bigr)-g\bigl(X'_T,\cL(X'_T)\bigr)\bigr]
\\
&\hspace{25pt}\le {\mathbb E} \bigl[\partial_xg(X_T,\cL(X_T)) \cdot (X_T-X'_T)  + \tilde{\mathbb E} \bigl[\partial_\mu g(X_T,\cL(X_t))(\tilde X_T)  
\cdot (\tilde X_T-\tilde {X'_T}) \bigr]
\bigr]
\\
&\hspace{25pt}= {\mathbb E} \bigl[ \bigl( \partial_xg(X_T,\cL(X_T))+ \tilde{\mathbb E}[\partial_\mu g(\tilde X_T, \cL(X_t))(X_T) ] \bigr) \cdot (X_T- X'_T) \bigr]
\\
&\hspace{25pt}= {\mathbb E} \bigl[(X_T-X'_T)\cdot Y_T \bigr],
\end{split}
\end{equation}
where we used Fubini and the fact that the `tilde random variables' are independent copies of the `non-tilde' ones. By the adjoint equation and taking the expectation, we get:
\begin{align}
\label{fo:x_diff}
&\textstyle{\mathbb E} \left[ (X_T-X'_T)\cdot Y_T \bigr] 
={\mathbb E} \biggl[ \int_0^T(X_t-X'_t) \cdot dY_t+\int_0^TY_t \cdot d[X_t- X'_t]
+\int_0^T[\sigma(\theta_t)-\sigma(\theta_t')]\cdot  Z_tdt \right]\nonumber
\\
&=\textstyle - {\mathbb E} \int_0^T
\bigl[ 
\partial_xH(\theta_{t},Y_t,Z_t) \cdot (X_t-X'_t) + 
\tilde{\mathbb E} \bigl[ \partial_\mu H(\tilde \theta_t,\tilde Y_t,\tilde Z_t)(X_t,\alpha_t) \bigr]  \cdot (X_t-X'_t)  \bigr] dt\nonumber
\\
&\textstyle \phantom{??????}+{\mathbb E} \int_0^T 
\bigl[
[b(\theta_t)-b(\theta'_t)] \cdot Y_{t}   + [\sigma(\theta_t)-\sigma(\theta'_t)]\cdot  Z_t \bigr] dt,
\end{align}
where we used integration by parts and the fact that $\bY=(Y_t)_{0 \leq t \leq T}$ solves the adjoint equation \eqref{fo:adjoint}. 
Again by Fubini's theorem, we get:
\begin{equation*}
\begin{split} \textstyle
{\mathbb E} \int_0^T  \tilde{\mathbb E} \bigl[ \partial_\mu H(\tilde \theta_t,\tilde Y_t,\tilde Z_t)(X_t,\alpha_t) \bigr] \cdot (X_t-X'_t)  dt
&\textstyle ={\mathbb E} \int_0^T \tilde{\mathbb E} \bigl[ \partial_\mu H(\theta_t,Y_t,Z_t)(\tilde X_t,\tilde\alpha_t)\cdot (\tilde X_t - \tilde X'_t) \bigr] dt.
\end{split}
\end{equation*}
Together with \eqref{fo:objective_diff}, \eqref{fo:g}, and \eqref{fo:x_diff}, this gives:
\begin{align*}
&J(\balpha)-J(\balpha')
\le \textstyle{\mathbb E}\int_0^T[H(\theta_t,Y_t,Z_t)-H(\theta'_t,Y_t,Z_t)]dt
\\
& \textstyle\phantom{????????????}-{\mathbb E}\int_0^T \Bigl[ \partial_xH(\theta_t,Y_t,Z_t) \cdot (X_t-X'_t) + 
\tilde{\mathbb E} \bigl[ \partial_\mu H( \theta_t,Y_t,Z_t)(\tilde X_t,\tilde \alpha_t) \cdot (\tilde X_{t} - \tilde X'_{t} ) \bigr] \Bigr] dt
\\
&\textstyle\phantom{????????????}\le {\mathbb E}\int_0^T \Bigl[ \partial_\alpha H(\theta_t,Y_t,Z_t) \cdot (\alpha_t-\alpha'_t) + 
\tilde{\mathbb E} \bigl[ \partial_\nu H(\theta_t,Y_t, Z_t)(\tilde X_t,\tilde\alpha_t) \cdot (\tilde \alpha_{t} - \tilde \alpha'_{t} ) \bigr] \Bigr] dt
\\
&\textstyle\phantom{????????????}= {\mathbb E}\int_0^T \Bigl( \partial_\alpha H(\theta_t,Y_t,Z_t) + \tilde\EE\bigl[ \partial_\nu H(\tilde\theta_t,\tilde Y_t, \tilde Z_t)(X_t,\alpha_t)\bigr] \Bigr)\cdot (\alpha_t-\alpha'_t)dt\, 
\leq 0,
\end{align*}
because of the convexity of $H$, Fubini's theorem, and \eqref{fo:Pontryagin_sufficient}, proving that $\balpha$ is optimal.
\end{proof}

\section{\textbf{Examples}}
\label{sect:examples}
In this section, we consider models for which the solution strategy suggested by the stochastic maximum principle proved in the previous section can be pushed further. In fact, in Sections~\ref{sub:mkv_liq} and \ref{sub:mkv_LQ}, we actually push the analysis all the way to explicit solutions. 

\subsection{\textbf{The Case of Scalar Interactions}}
\label{sub:mkv_scalar}
For the sake of definiteness, we state explicitly what the above forms of the Pontryagin stochastic maximum principle  become in the case of scalar interactions. This particular case is often studied first because it can be dealt with using standard calculus since it does not really need the full generality of the differential calculus on Wasserstein spaces of measures. In the next subsection we will study a special example of scalar interactions, in an optimal liquidation problem, where we provide explicit solutions; see also \cite{G} for another application of scalar interactions.

In this subsection, the drift function $b$ (resp.\ the running cost function $f$) is of the form
$$
\textstyle{b(x,\alpha,\xi)=b_0\left(x,\alpha,\int \varphi d\xi\right),\qquad \left(\text{resp. }f(x,\alpha,\xi)=f_0\left(x,\alpha,\int\psi d\xi\right)\;\right)}
$$
for some functions $\RR^d\times A\times \RR\ni (x,\alpha,\zeta)\mapsto b_0(x,\alpha,\zeta)\in\RR$ and $\RR^d\times A\ni (x,\alpha)\mapsto \varphi(x,\alpha)\in\RR$ (resp. $\RR^d\times A\times \RR\ni (x,\alpha,\zeta)\mapsto f_0(x,\alpha,\zeta)\in\RR$ and $\RR^d\times A\ni (x,\alpha)\mapsto \psi(x,\alpha)\in\RR$). 
Similarly, we assume that 
$g(x,\mu)=g_0\left(x,\int \phi d\mu\right)$ for some functions $\RR^d\times\RR\ni(x,\zeta)\mapsto g_0(x,\zeta)\in\RR$ and  $\RR^d\ni x\mapsto \phi(x)\in\RR$.
In order to simplify the notation, we shall assume that the volatility is independent of the control, and actually we take $\sigma\equiv I_d$. 

\vskip 2pt
Under these circumstances, the adjoint equation becomes:
\begin{equation*}
\label{fo:scalar_adjoint}
\begin{split}
dY_t&=-\Bigl(\partial_xb_0(X_t,\alpha_t,\EE[\varphi(X_t,\alpha_t)])Y_t +\partial_xf_0(X_t,\alpha_t,\EE[\psi(X_t,\alpha_t)]) \\
&\hspace{55pt}+\tilde\EE[\tilde Y_t\cdot \partial_\zeta b_0(\tilde X_t,\tilde \alpha_t,\EE[\varphi(X_t,\alpha_t)])]\partial_x\varphi(X_t,\alpha_t)\\
&\hspace{75pt}+\tilde\EE[\partial_\zeta f_0(\tilde X_t,\tilde \alpha_t,\EE[\psi(X_t,\alpha_t)])]\partial_x\psi(X_t,\alpha_t)
\Bigr)dt + Z_t dW_t\, ,
\end{split}
\end{equation*}
with terminal condition $Y_T=\partial_xg_0\left (X_T,\EE[\phi(X_T)]\right ) + \tilde\EE[\partial_\zeta g_0(\tilde X_T,\EE[\phi(X_T)])]\partial_x\phi(X_T)$. Accordingly, the necessary condition \eqref{fo:Pontryagin_sufficient0} for optimality will be satisfied when
\begin{align}
\label{fo:scalar_sufficient}
0&=\partial_\alpha b_0(X_t,\alpha_t,\EE[\varphi(X_t,\alpha_t)])\cdot Y_t +\partial_\alpha f_0(X_t,\alpha_t,\EE[\psi(X_t,\alpha_t)]) \\
&+\tilde\EE[\tilde Y_t\cdot \partial_\zeta b_0(\tilde X_t,\tilde \alpha_t,\EE[\varphi(X_t,\alpha_t)])]\partial_\alpha \varphi(X_t,\alpha_t+\tilde\EE[\partial_\zeta f_0(\tilde X_t,\tilde \alpha_t,\EE[\psi(X_t,\alpha_t)])]\partial_\alpha \psi(X_t,\alpha_t).\nonumber
\end{align}

\subsection{Optimal liquidation with market impact}\label{sub:mkv_liq}
We explicitly solve an example which lies outside the classical LQ framework, in the sense that convexity fails. This is inspired by an optimal liquidation problem with price impact, but due to its simplicity it is more of mathematical interest than financial one.

Consider a market where a group of agents have a position on a certain asset which they want to liquidate by a fixed time $T>0$. Trades of all market participants reflect on a permanent and a temporary market impact. The optimal trades will be a result of the trade-off between trading slowly to reduce the market impact (or execution/liquidity cost), and trading fast to reduce the risk of future uncertainty in prices; see e.g.\ \cite{almgren2001optimal,CJ,CL,CL15,BP}.
We formulate the asymptotic problem (infinite number of agents) in the case of cooperative equilibria.
The price process is modeled by
\[
dS_t=\lambda\EE[\alpha_t]dt+\sigma dW_t,\quad S_0=s_0,
\]
where $\lambda\EE[\alpha_t]$ is the permanent market impact to which all agents contribute. Naturally $\lambda\geq 0$. The inventory process evolves as
\begin{equation}\label{eq_inv}
dQ_t=\alpha_tdt, 
\end{equation}
with $Q_0$ {(possibly random)} being the initial inventory to deplete by time $T$.
The wealth process is given by
\[
dX_t=-\alpha_t(S_t+k\alpha_t)dt, \quad X_0=0,
\]
where $k\alpha_t$ is the temporary market impact each agent is subject to (we can think of it as fees/liquidity cost).
The cost to be minimized is
\[\textstyle
\EE\left[\phi\int_0^T Q_t^2dt-Q_T(S_T-AQ_T)-X_T\right],
\]
where $\phi$ is a risk aversion parameter, $Q_T(S_T-AQ_T)$ is the liquidation value of the remaining quantity at terminal time (with a liquidation/execution penalization), and $X_T$ is the terminal profit due to trading in $[0,T]$.
Using the dynamics of $X$, this can be rewritten as
\[\textstyle
\EE\left[\int_0^T (\phi Q_t^2+\alpha_tS_t+k\alpha_t^2)dt-Q_T(S_T-AQ_T)\right].
\]
So here we have a 2-dimensional state process $(S,Q)$, a 1-dimensional Wiener process $W$, and the control process is the trading speed $\alpha$. The Hamiltonian of the system is
\[
H(x_1,x_2,a,\xi,y_1,y_2)=\lambda\bar\xi_2y_1+ay_2+\phi x_2^2+ax_1+ka^2,
\]
where $\bar\xi_2=\int v\xi(du,dv)$, and the first order condition \eqref{fo:scalar_sufficient} reads as
\begin{equation}\label{FOC_lo}
Y_t^2+S_t+2k\alpha_t+\lambda\EE[Y_t^1]=0,
\end{equation}
with the adjoint equations being given by
\begin{eqnarray}\label{eq_adj1}
dY_t^1&=&-\alpha_tdt+Z_t^{1}dW_t,\qquad Y_T^1=-Q_T\\
dY_t^2&=&-2\phi Q_tdt+Z_t^{2}dW_t,\hspace*{0.3cm} Y_T^2=-S_T+2AQ_T.\label{eq_adj2}
\end{eqnarray}

\begin{rem}
The convexity assumption \eqref{fo:g_convexity} on $g$ is not satisfied. However, when $A\ge \lambda/2$ (satisfied for typical values of the parameters; see \cite{CJ,CL}), the cost function can be rearranged so that convexity is recovered, thus guaranteeing that the first order condition \eqref{FOC_lo} is not only necessary but also sufficient for the optimality of $\alpha$. For instance, by integration by parts for the term $\EE[Q_TS_T]$, one can obtain that the cost of the problem consists of a convex part (in the usual sense) plus the term $(A-\lambda/2)\EE[(Q_T)^2]+(\lambda/2) \text{Var}(Q_T)$, the second part of which is convex in the sense of \eqref{fo:g_convexity}, as can be easily seen.
\end{rem}

An inspection of \eqref{eq_adj1} suggests that we have $Z_t^{1}=0$, and $Y_t^1=-Q_0-\int_0^t\alpha_sds=-Q_t$; $Y_t^2$ will be determined later.
Substituting in \eqref{FOC_lo}, we have
\begin{eqnarray*}\textstyle
Y_0^2-2\phi\int_0^tQ_sds+\int_0^tZ_s^{2}dW_s+s_0+
\lambda\int_0^t\EE[\alpha_s]ds
+\sigma W_t+2k\alpha_t-\lambda(\EE[Q_0]+\int_0^t\EE[\alpha_s]ds)=0,
\end{eqnarray*}
that is,
\begin{equation}\label{eq_speed}\textstyle
\alpha_t=\frac{\lambda\EE[Q_0]-Y_0^2-s_0}{2k}+\frac{\phi}{k}\int_0^tQ_sds-\frac{1}{2k}\int_0^t(Z_s^{2}+\sigma)dW_s.
\end{equation}

We now show that $Q\equiv Q^0$ and $\alpha\equiv \alpha^0$, where 
\[
Q^0_t:=\EE[Q_t|Q_0],\qquad \alpha^0_t:=\EE[\alpha_t|Q_0].
\]
By taking conditional expectation in \eqref{eq_inv} and \eqref{eq_speed}, we get
\begin{equation}\label{eq_av0}\textstyle
Q_t^0=Q_0+\int_0^t\alpha_s^0ds,\qquad
\alpha^0_t=\alpha_0+\frac{\phi}{k}\int_0^tQ^0_sds.
\end{equation}
Setting $F(t):=Q_t^0$, we note that $F'(t)=\alpha_t^0$, and
$F''(t)=\frac{\phi}{k}F(t)$.
Together with the initial conditions $F(0)=Q_0$ and $F'(0)=\alpha_0$, this gives
\begin{equation}\label{eq_ft}\textstyle
F(t)=\left(\frac{Q_0}{2}-\frac{\alpha_0}{2r}\right)e^{-rt}+\left(\frac{Q_0}{2}+\frac{\alpha_0}{2r}\right)e^{rt},
\end{equation}
where $r=\sqrt{\phi/k}$. 
Now, by taking conditional expectation in equation \eqref{eq_adj2}, and substituting in \eqref{eq_av0}, we obtain
\begin{eqnarray}\label{eq_a0T}
\alpha^0_T&=&\textstyle\frac{\lambda \EE[Q_0]-2AQ_0}{2k}+\frac{\lambda}{2k}\int_0^T\EE[\alpha_t]dt
-\frac{A}{k}\int_0^T\alpha^0_tdt \nonumber\\
&=&\textstyle\frac{\lambda \EE[Q_0]-2AQ_0}{2k}+\frac{\lambda}{2k}(\EE[Q_T]-\EE[Q_0])
-\frac{A}{k}(Q^0_T-Q_0)\\
&=& \textstyle\frac{\lambda}{2k} \EE[Q_T]-\frac{A}{k}Q^0_T,\nonumber
\end{eqnarray}
that is, $F'(T)=\frac{\lambda}{2k} \EE[F(T)]-\frac{A}{k}F(T)$. Imposing this condition, and using \eqref{eq_ft}, we obtain:
\begin{equation}\label{eq_a0}
\alpha_0=Q_0r\frac{d_1e^{-rT}-d_2e^{rT}}{d_1e^{-rT}+d_2e^{rT}} + \frac{\EE[Q_0]4\lambda\phi}{(d_1e^{-rT}+d_2e^{rT})(c_1e^{-rT}+c_2e^{rT})},
\end{equation}
where $d_1=\sqrt{\phi k}-A, d_2=\sqrt{\phi k}+A, c_1=2d_1+\lambda, c_2=2d_2-\lambda$.
From \eqref{eq_speed}, we also have an explicit expression for $Y_0^2=\lambda \EE[Q_0]-s_0-2k\alpha_0$.

Now we use the ansatz: $Z^{2}\equiv-\sigma$, and show that the process
\begin{equation}\label{eq_y2tilde}\textstyle
Y_t^2=Y_0^2-2\phi\int_0^tQ_sds-\sigma W_t
\end{equation}
does satisfy the equation and terminal condition in \eqref{eq_adj2}. Only the latter needs to be shown. First note that, with this ansatz, from \eqref{eq_speed} and \eqref{eq_inv} we have:
\[\textstyle
\alpha_t=\alpha_0+\frac{\phi}{k}\int_0^tQ_sds,\qquad
Q_t=Q_0+\alpha_0t+\frac{\phi}{k}\int_0^t\int_0^sQ_udu\ ds,
\]
thus both processes $\alpha$ and $Q$ are $\sigma(Q_0)-$measurable, that is,
\begin{equation}\label{eq_meas}
Q_t=\EE[Q_t|Q_0]=Q^0_t,\quad \textrm{and}\quad \alpha_t=\EE[\alpha_t|Q_0]=\alpha^0_t.
\end{equation}
We now check that $Y^2$ satisfies the terminal condition in \eqref{eq_adj2}. By \eqref{eq_meas}, \eqref{eq_y2tilde} implies
\[\textstyle
Y^2_T=\lambda \EE[Q_0]-s_0-2k\alpha_0-2\phi\int_0^T Q^0_tdt-\sigma W_T.
\]
On the other hand, by \eqref{eq_meas}, \eqref{eq_a0T} and \eqref{eq_av0},
\begin{eqnarray*}
-S_T+2AQ_T&=&-s_0-\lambda (\EE[Q_T]-\EE[Q_0])-\sigma W_T+2AQ^0_T=-s_0+\lambda \EE[Q_0]-2k\alpha^0_T-\sigma W_T\\
&=&-s_0+\lambda \EE[Q_0]-2k\alpha_0-2\phi\int_0^T Q^0_tdt-\sigma W_T,
\end{eqnarray*}
which yields $Y^2_T=-S_T+2AQ_T$, as wanted. This shows that the process $Z^{2}$ in the ansatz, together with $Y^2$ defined above, do satisfy \eqref{eq_adj2}.
We have seen that this gives $Q_t=F(t)$ and $\alpha_t=F'(t)$, thus from \eqref{eq_ft} we have
\begin{eqnarray*}\textstyle
Q_t=\left(\frac{Q_0}{2}-\frac{\alpha_0}{2r}\right)e^{-rt}+\left(\frac{Q_0}{2}+\frac{\alpha_0}{2r}\right)e^{rt},\;\;
\alpha_t=\left(-\frac{Q_0r}{2}+\frac{\alpha_0}{2}\right)e^{-rt}+\left(\frac{Q_0r}{2}+\frac{\alpha_0}{2}\right)e^{rt},
\end{eqnarray*}
which in turns, by \eqref{eq_a0}, gives
\begin{eqnarray*}
Q_t&=&\textstyle Q_0\frac{d_1e^{-r(T-t)}+d_2e^{r(T-t)}}{d_1e^{-rT}+d_2e^{rT}}+\EE[Q_0]\frac{2\lambda\sqrt{\phi k}(-e^{-rt}+e^{rt})}{(d_1e^{-rT}+d_2e^{rT})(c_1e^{-rT}+c_2e^{rT})}
\\
\alpha_t&=&\textstyle Q_0r\frac{d_1e^{-r(T-t)}-d_2e^{r(T-t)}}{d_1e^{-rT}+d_2e^{rT}}+\EE[Q_0]\frac{2\lambda\phi(e^{-rt}+e^{rt})}{(d_1e^{-rT}+d_2e^{rT})(c_1e^{-rT}+c_2e^{rT})}.
\end{eqnarray*}

\subsection{\textbf{The Linear-Quadratic Case}}
\label{sub:mkv_LQ}
In this subsection, we use the sufficient condition derived above to solve a simple Linear Quadratic (LQ) model. Via different methods, such models have been already studied in the literature; see e.g.\ \cite{BP,PW,yong2013linear,graber2016linear}. For the sake of simplicity, we give the details of the computations in the scalar case $m=d=k=1$ {and with $A=\RR$}. Also as before, we assume that the volatility is not controlled for the sake of simplicity, and in fact that it is identically equal to $1$. In such LQ model, the drift is of the form:
$$
b(x,\alpha,\xi)=b_1 x+b_2\alpha+\bar b_1\bar x + \bar b_2\bar\alpha,
$$
for some constants $b_1,b_2,\bar b_1,\bar b_2$, where we denote by $\bar x$ and $\bar\alpha$ the means of the state and the control in the sense that
$\bar x=\int\int x\xi(dx,d\alpha)$, and $\bar \alpha=\int\int \alpha\xi(dx,d\alpha)$.
As for the cost functions, we assume that:
$$
\textstyle{f(x,\alpha,\xi)=\frac12\left[
q x^2+\bar q (x-s\bar x)^2 + r\alpha^2 +\bar r(\alpha -\bar s\bar \alpha)^2
\right],\quad g(x,\mu)=\frac12\gamma x^2 + \frac{\bar\gamma}2(x-\rho \bar x)^2},
$$
for some constants $q,\bar q,r, \bar r, s,\bar s,\gamma,\delta,\rho$ satisfying $\bar q,\bar r,\bar\gamma\ge 0$ and $q,r,\gamma >0$. 
Under these conditions, the Hamiltonian reads:
\begin{equation}\label{eq Ham LQ ex}
\textstyle{H(x,\alpha,\xi,y)=(b_1 x+b_2\alpha+\bar b_1\bar x + \bar b_2\bar\alpha)y+\frac12\left[
q x^2+\bar q (x-s\bar x)^2 + r\alpha^2 +\bar r(\alpha -\bar s\bar \alpha)^2
\right].}
\end{equation}
Accordingly, the adjoint equation reads as:
\begin{equation}
\label{fo:LQ_adjoint}
dY_t=-\Bigl(b_1Y_t +(q+\bar q)X_t + \bar b_1\EE[Y_t] + s\bar q(s-2)\EE[X_t]\Bigr)dt  + Z_t dW_t.
\end{equation}
In the present situation, conditions (i) and (ii) of Theorem~\ref{th:pontryagin} hold, and condition 
\eqref{fo:Pontryagin_sufficient0} of the Pontryagin stochastic maximum principle holds if:
\begin{equation}
\label{fo:LQ_sufficient}
b_2 Y_t +\bar b_2\EE[Y_t] +(r+\bar r)\alpha_t +\bar r\bar s(\bar s -2) \EE[\alpha_t]= 0.
\end{equation}
Taking expectations, we get
\begin{equation}
\label{fo:bar_alpha_t}\textstyle
\EE[\alpha_t] = -\frac{b_2+\bar b_2}{r+\bar r(\bar s-1)^2}\EE[Y_t].
\end{equation}
Plugging this expression into \eqref{fo:LQ_sufficient}, we get:
\begin{equation}
\label{fo:alpha_t}\textstyle
\alpha_t=-\frac{1}{r+\bar r}\Bigl(b_2 Y_t +\Bigl( \bar b_2-\frac{\bar r \bar s (\bar s -2)(b_2+\bar b_2)}{r+\bar r(\bar s-1)^2}\Bigr) \bar Y_t \Bigr).
\end{equation}
We can rewrite \eqref{fo:alpha_t} and \eqref{fo:bar_alpha_t} as
\begin{equation}
\alpha_t= a Y_t + b \EE[Y_t]\qquad \text{and}\qquad \EE[\alpha_t]=c\EE[Y_t],
\end{equation}
with
\begin{equation}\label{eq:abc}\textstyle
a= -\frac{b_2}{r+\bar r},\quad b= -\frac{1}{r+\bar r}\Bigl( \bar b_2-\frac{\bar r \bar s (\bar s -2)(b_2+\bar b_2)}{r+\bar r(\bar s-1)^2}\Bigr),\quad \text{and}\quad c=-\frac{b_2+\bar b_2}{r+\bar r(\bar s-1)^2}.
\end{equation}
With this notation, the solution of the mean field optimal control of the McKean-Vlasov SDE \eqref{fo:state} reduces to the solution of the following forward-backward stochastic differential equation (FBSDE) of McKean-Vlasov type:
\begin{equation}
\label{fo:MKV_fbsde}
\begin{cases}
&dX_t=\bigl( b_1 X_t +\bar b_1 \EE[X_t] +(ab_2Y_t+{ (bb_2+c\bar b_2)\EE[Y_t]}\Bigr)dt + dW_t\\
&dY_t=-\Bigl(b_1Y_t +(q+\bar q)X_t + \bar b_1\EE[Y_t] + s\bar{q}(s-2)\EE[X_t]\Bigr)dt  + Z_t dW_t,
\end{cases}
\end{equation} 
with terminal condition $Y_T=(\gamma +\bar\gamma) X_T +\bar\gamma\rho(\rho-2)\EE[X_T]$. We solve this system in the usual way. First we compute the means $\bar x_t=\EE[X_t]$ and $\bar y_t=\EE[Y_t]$. Taking expectations in \eqref{fo:MKV_fbsde} we obtain:
\begin{equation}
\label{fo:mean_MKV_fbsde}
\begin{cases}
&d\bar x_t=\bigl( (b_1  +\bar b_1) {\bar x_t} +(ab_2+{ bb_2+c\bar b_2})\bar y_t\Bigr)dt\\
&d\bar y_t=-\Bigl((b_1+\bar b_1)\bar y_t +(q+\bar q + s\bar q(s-2))\bar x_t\Bigr)dt,
\end{cases}
\end{equation} 
with terminal condition $\bar y_T=(\gamma +\bar\gamma+\bar\gamma\rho(\rho-2))\bar x_T$. We search for a solution in the form
{$\bar y_t=\bar\eta_t \bar x_t$, for a deterministic function $t\mapsto \bar\eta_t$}. Computing the derivatives of $\bar y_t$ from the ansatz and from the system \eqref{fo:mean_MKV_fbsde}, and identifying the two, we find that $\bar\eta_t$ should be the solution of the scalar Riccati equation:
\begin{equation}\label{eq:Ricc}
\bar\eta'_t=2A\bar\eta_t+B\bar\eta^2_t+C,
\end{equation}
with terminal condition $\bar\eta_T=K$, where 
$
A=-(b_1  +\bar b_1),  B=-(ab_2+ bb_2+c\bar b_2),  C=-(q+\bar q + s\bar q(s-2))$, and $ K=(\gamma +\bar\gamma+\bar\gamma\rho(\rho-2))$.
By setting $\bar\eta_t=-\frac{\dot{z}_t}{Bz_t}$, solving the above Riccati equation is equivalent to solving the second order linear equation
$
\ddot{z}_t-2A\dot{z}_t+BCz_t=0,
$
with $z_T=1, \dot{z}_T=-BK$. This has explicit solution, whose form depends on the sign of $A^2-BC$. In this way we find the expression for $\bar\eta$, which plugged into \eqref{fo:mean_MKV_fbsde}, and then into \eqref{fo:MKV_fbsde}, reduces the latter to a standard affine FBSDE. This can be solved by noticing that its decoupling field has to be affine, in other words, by searching for a solution in the form $Y_t=\eta_t X_t +\chi_t$ for two deterministic functions $t\mapsto \eta_t$ and $t\mapsto \chi_t$. Computing the stochastic It\^o derivatives of $Y_t$ from the ansatz and from the system \eqref{fo:MKV_fbsde}, and identifying the two, we find that $\eta_t$ should be the solution of the scalar Riccati equation:
\[
\eta_t=-\frac{1}{2b_1}(q+\bar q+\eta'_t+ab_2\eta^2_t),
\]
and once the latter is solved, $\chi_t$ can also be explicitly obtained.
By Theorem~\ref{th:pontryagin}, the control $\balpha$ obtained in this way is optimal. Notice that it takes the form
\[
\alpha_t=a\eta_tX_t+a\chi_t+b\bar\eta_t\EE[X_t],
\]
with $a$ and $b$ given in \eqref{eq:abc}, and $\bar\eta,\eta,\chi$ obtained by solving the above Riccati equations.

\begin{rem}
\label{rem example} 
In classical control of Mean Field type, the pointwise minimization of the Hamiltonian with respect to the control is a necessary optimality condition. Let us illustrate with the LQ example how this need not be the case in our extended framework. If we impose pointwise minimization of \eqref{eq Ham LQ ex} with respect to $\alpha$, we get 
$b_2 Y_t +r\alpha_t +\bar r(\alpha_t -\bar s\bar\alpha_t) =0$.
Integrating it, we obtain $b_2 \EE[Y_t] +(r+\bar{r}-\bar{r}\bar{s})\bar{\alpha_t}  =0$. On the other hand, the necessary condition \eqref{fo:Pontryagin_sufficient} implies \eqref{fo:LQ_sufficient}, so we have $\bar{b}_2\EE[Y_t]+\bar{r}\bar{s}(\bar{s}-1)\bar{\alpha}_t =0$. The right choice of parameters leads to a contradiction between this and the previous equation. 
\end{rem}

\section{Variational Perspective in the Weak Formulation}
\label{sect:wMKV}
The goal of this section is to analyse the extended Mean Field control problem from a purely variational perspective. For this reason, we start by introducing in Section \ref{Sec:subsec weak} a weak formulation of the extended Mean Field control problem, especially well-suited for variational analysis. In such formulation, the probability space is not specified a priori. We remark that a weak formulation of the Mean Field control problem has been considered in \cite[Sect.\ 6.6]{CarmonaDelarue_book_vol_I} and in \cite{La17}, the latter rigorously proving convergence of large systems of interacting control problems to the corresponding Mean Field control problem. However, in these works there is no non-linear dependence on the law of the control; cf.\ our problem \eqref{eq:MKV} below.

We proceed in Section \ref{sec:MKVm} to obtain what we call a \emph{martingale optimality condition}. Such a condition can serve as a verification tool, in order to evaluate whether a given control can be optimal at all. It is therefore the weak-formulation analogue of the necessary Pontryagin maximum principle. {This forms a bridge between the previous sections of this work, and the ensuing ones.} Whenever the Pontryagin maximum principle can be used (or the martingale optimality condition in the weak formulation), it is a powerful tool to identify optimal controls and the trajectories of the state at the optimum. However, it does not say much about the optimal value of the problem. In fact, at the optimum, the adjoint process gives formally the value of the gradient of the value function when computed along the optimal trajectories. In order to study the value function of the control problem (in a situation in which PDE techniques are highly non-trivial) we recast  in Section \ref{Sec:transp} our weak formulation in transport-theoretic terms. {Numerical optimal transport has spectacularly grown in strength over the last few years; see e.g.\ \cite{cuturi2013sinkhorn,benamou2015iterative,2018-Peyre-computational-ot} and the references therein. {Our connection between transport and Mean Field control is meant to lay ground for efficient numerical methods in the future.} In Section \ref{subsec transport discretization} we provide, at a theorerical level, a first discretization scheme of this kind. To be specific, the optimal transport problem we obtain in the discretization has an additional \emph{causality} constraint (see e.g.\ \cite{La,ABZ,BBEP,BBLZ}}); the numerical analysis of such problems is also having a burst of activity (e.g.\ \cite{pflug2009version,pflug2012distance,pflug2016multistage}).

\subsection{The Weak Formulation}
\label{Sec:subsec weak}

We present a \emph{weak formulation} of the extended Mean Field control problem formulated in Section~\ref{sect:gMKV}, in the sense that the probability space is not specified here. 
We restrict our attention to the case where the state dynamics have uncontrolled volatility, {actually assuming $\sigma\equiv I_d$}, $m=d$, that the drift does not depend on the law of the control, and that the initial condition $X_0$ is a constant $x_0$. We thus consider the minimization problem
\begin{eqnarray}\label{eq:MKV}
\begin{split}
&&\inf_{\PP,\balpha}{\mathbb E}^\PP\left[\int_0^T f(X_t,\alpha_t,\cL_\PP(X_t,\alpha_t))dt+g(X_T,\cL_\PP(X_T))\right]\\
&& \textrm{subject to}\quad dX_t=b\left(X_t,\alpha_t,\cL_\PP(X_t)\right)dt+dW_t,\,\, X_0=x_0,
\end{split}
\end{eqnarray}
where the infimum is taken over filtered probability spaces $(\Omega,\FF,\PP)$ supporting some d-dimensional Wiener process $\bW$, and  over control processes $\balpha$ which are progressively measurable on $(\Omega,\FF,\PP)$ and $\RR^k$-valued. We use $\cL_\PP$ to denote the law of the given random element under $\PP$. Again, we choose time independent coefficients for simplicity, but all the results would be the same should $f$ and $b$ depend upon $t$.

We say that $(\Omega,\FF,\PP,\bW,\bX,\balpha)$ is a feasible tuple if it participates in the above optimization problem yielding a finite cost.

\subsection{Martingale Optimality Condition}
\label{sec:MKVm}
We now obtain a necessary Pontryagin principle for the weak formulation \eqref{eq:MKV}. We call this the \emph{martingale optimality condition}. Since our aim is to illustrate the method, we assume only in this part that we are dealing with a drift-control problem $$b(x,\alpha,\mu) = \alpha.$$
We start by expressing the objective function of \eqref{eq:MKV} in canonical space, as a function of semimartingale laws. We denote by $\Ccal_{x_0}$ the space of $\RR^d$-valued continuous paths started at $x_0$, and $\bS$ the canonical process on it. We consider the set of semimartingale laws
\begin{equation}\label{eq set S}
\tilde \Pcal:=\{\bmu\in\Pcal(\Ccal_{x_0}) : dS_t=\alpha_t^{\bmu}(\bS) dt+dW_t^{\bmu}\; \mu\textrm{-a.s.}\},
\end{equation}
where $\bW^{\bmu}$ is a $\bmu$-Brownian motion and $\balpha^{\bmu}$ is a progressively measurable process w.r.t.\ the canonical filtration, denoted by $\Fcal$. 
It is then easy to see that \eqref{eq:MKV} is equivalent to 
\begin{equation}\label{eq:MKVS}
\inf_{ \bmu\in \tilde \Pcal} \mathbb E^{\bmu}\left[\int_0^T f\big(S_t,\alpha^{\bmu}_t,\cL_{\bmu}(S_t,\alpha^{\bmu}_t)\big)dt+g(S_T,\mu_T)\right].
\end{equation}
In what follows we consider perturbation of measures in $\tilde \Pcal$ via push-forwards along absolutely continuous shifts which preserve the filtration; see the work of Cruzeiro and Lassalle \cite{CL} and the references therein. {Using push-forwards instead of perturbations directly on the SDE is the main difference between the weak and the strong perspective.}

\begin{rem}\label{rem:K}
Let $\bmu\in\tilde \Pcal$. We say that an adapted process $U:\Ccal_{x_0}\to\Ccal_{x_0}$ is $\bmu$-invertible, if there exists $V:\Ccal_{x_0}\to\Ccal_{x_0}$ adapted such that $U\circ V =\mathrm{id}_{\Ccal_{x_0}}$ holds $U(\bmu)-$a.s., and  $V\circ U = \mathrm{id}_{\Ccal_{x_0}}$ holds $\bmu-$a.s.. Now let $K_\cdot=\int_0^.k_tdt$ be adapted. We say that $K$ preserves the filtration under $\bmu$, if for every $U$ which is $\bmu$-invertible we also have that $U+K$ is $\bmu$-invertible. It follows that the set of those $K=\int_0^.k_tdt$ that preserve the filtration under $\bmu$, is a linear space. It also follows that for such $K$ we have $\bmu^{\epsilon,K}:=(Id+\epsilon K)_*\bmu\in\tilde \Pcal$, with $\alpha_t^{\bmu^{\epsilon,K}}(\bS+\epsilon K(\bS))=\alpha_t^{\bmu}(\bS)+\epsilon k_t(\bS)$; see \cite[Proposition 2.1, Lemma~3.1]{CL}.
\end{rem}

In analogy to Theorem~5.1 in \cite{CL}, we then obtain the following necessary condition for an optimizer in \eqref{eq:MKVS}. We use here the  notation $\theta^{\bmu}_t=(S_t,\alpha^{\bmu}_t,\cL_{\bmu}(S_t,\alpha^{\bmu}_t))$.
\begin{prop}
Let $\bmu$ be an optimizer for \eqref{eq:MKVS}. Then the process  $N^{\bmu}$ given by
\begin{equation}\label{eq:mg}
N_t^{\bmu}:=
\partial_a f(\theta^{\bmu}_t)
+\tilde{\mathbb E}[
\partial_\nu f(\theta^{\bmu}_t)(\tilde S_t,\tilde\alpha_t)]
-\int_0^t\Big(\partial_x f(\theta^{\bmu}_s)
+\tilde{\mathbb E}[
\partial_\mu f(\theta^{\bmu}_s)(\tilde S_s,\tilde\alpha_s)]\Big)ds 
\end{equation}
is a $\bmu$-martingale with terminal value equal to
\begin{equation}\label{eq:tv}
N_T^{\bmu}=-\partial_x g(S_T,\mu_T)
-\tilde{\mathbb E}[\partial_\mu g(S_T,\mu_T)(\tilde S_T)]
-\int_0^T\Big(\partial_x f(\theta^{\bmu}_s)
+
\tilde{\mathbb E}[
\partial_\mu f(\theta^{\bmu}_s)(\tilde S_s,\tilde\alpha_s)]\Big)ds.
\end{equation}
\end{prop}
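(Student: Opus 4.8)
The plan is to mimic the variational computation behind Lemma~\ref{fo:Hgateaux}, but performed in the weak/canonical-space formulation: instead of perturbing the control directly in the SDE, I perturb the semimartingale law $\bmu$ by push-forward along a filtration-preserving shift $K=\int_0^\cdot k_t\,dt$ as in Remark~\ref{rem:K}. First I would fix such a $K$ and consider $\bmu^{\epsilon,K}=(\mathrm{Id}+\epsilon K)_*\bmu\in\tilde\Pcal$, recalling from \cite[Prop.~2.1, Lem.~3.1]{CL} that its drift is $\alpha^{\bmu}_t+\epsilon k_t$. Then I would write out the objective functional in \eqref{eq:MKVS} evaluated at $\bmu^{\epsilon,K}$, change variables back to $\bmu$ via the push-forward (so $S_t$ is replaced by $S_t+\epsilon K_t(\bS)$, $\alpha^{\bmu^{\epsilon,K}}_t$ by $\alpha^{\bmu}_t+\epsilon k_t$, and the law arguments by $\cL_{\bmu}(S_t+\epsilon K_t,\alpha^{\bmu}_t+\epsilon k_t)$, etc.), and differentiate at $\epsilon=0$.

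The key computation produces a Gâteaux derivative that, using the definition of L-differentiation recalled in Section~\ref{sect:gMKV} (splitting into the $\partial_x,\partial_a$ directional terms and the $\partial_\mu,\partial_\nu$ measure-derivative terms, the latter handled by introducing an independent copy $(\tilde S,\tilde\alpha)$ and using Fubini exactly as in the proof of Lemma~\ref{fo:Hgateaux} to fold the tilde-terms back), reads
\begin{equation*}
\frac{d}{d\epsilon}\Big|_{\epsilon=0}(\text{cost at }\bmu^{\epsilon,K})
={\mathbb E}^{\bmu}\!\Big[\int_0^T\!\big(\partial_x f(\theta^{\bmu}_t)+\tilde{\mathbb E}[\partial_\mu f(\theta^{\bmu}_t)(\tilde S_t,\tilde\alpha_t)]\big)\!\cdot\! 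K_t\,dt
+\int_0^T\!\big(\partial_a f(\theta^{\bmu}_t)+\tilde{\mathbb E}[\partial_\nu f(\theta^{\bmu}_t)(\tilde S_t,\tilde\alpha_t)]\big)\!\cdot\! k_t\,dt
+\big(\partial_x g(S_T,\mu_T)+\tilde{\mathbb E}[\partial_\mu g(S_T,\mu_T)(\tilde S_T)]\big)\!\cdot\! K_T\Big].
\end{equation*}
Since $\bmu$ is optimal and the set of filtration-preserving $K$ is a linear space (Remark~\ref{rem:K}), this derivative must vanish for every such $K$. Integrating the $K_t$-terms by parts in time (writing $K_T-K_t=\int_t^T k_s\,ds$ and applying Fubini on $[0,T]^2$) converts the expression into ${\mathbb E}^{\bmu}\big[\int_0^T (\text{something}_t)\cdot k_t\,dt\big]=0$ for all admissible $k$, where $\text{something}_t$ is precisely $N^{\bmu}_t - N^{\bmu}_T + \big(\partial_x g(\cdots)+\tilde{\mathbb E}[\cdots]\big)$-type remainder arranged so that $\text{something}_t=N_t^{\bmu}-\mathbb E^{\bmu}[N_t^{\bmu}\mid\Fcal_{\text{something}}]$; more carefully, one recognizes that vanishing against all filtration-preserving $k$ forces the process $N^{\bmu}$ of \eqref{eq:mg} to be a $\bmu$-martingale, and bookkeeping the terminal contributions gives the terminal value \eqref{eq:tv}. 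This is the same passage from a first-order stationarity identity to a martingale property that underlies Theorem~5.1 of \cite{CL}, and I would invoke that argument (the characterization of which processes annihilate all filtration-preserving shifts) rather than reprove it.

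The main obstacle I anticipate is the rigorous justification of differentiating under the expectation and the path-space integral at $\epsilon=0$: one needs uniform integrability of the difference quotients, which requires the growth bounds on $\partial_x f,\partial_a f,\partial_\mu f,\partial_\nu f,\partial_x g,\partial_\mu g$ from Assumption~(II) together with the fact that $\epsilon\mapsto\bmu^{\epsilon,K}$ stays in $\tilde\Pcal$ with controlled moments (the shifts $K$ being bounded-variation and, in the relevant class, bounded). A secondary subtlety is that the admissible perturbation directions $k$ are not arbitrary — they must preserve the filtration under $\bmu$ — so one must check that this class is rich enough to separate the candidate martingale from a general adapted process; here I would lean directly on \cite[Sect.~3]{CL}, where exactly this density/separation is established, so that the conclusion "$N^{\bmu}$ is a $\bmu$-martingale with the stated terminal value" follows. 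Everything else (the Fubini manipulations with the independent tilde-copy, the integration by parts in time) is routine and parallels the strong-formulation proof of Lemma~\ref{fo:Hgateaux}.
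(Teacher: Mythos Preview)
Your proposal is correct and follows essentially the same approach as the paper: perturb $\bmu$ by push-forward along filtration-preserving shifts $K$, use the linear-space property of such $K$ (Remark~\ref{rem:K}) to upgrade the first-order inequality at the optimizer to an equality, compute the G\^ateaux derivative via the drift transformation $\alpha_t^{\bmu^{\epsilon,K}}(\bS+\epsilon K)=\alpha_t^{\bmu}+\epsilon k_t$, and then invoke the argument of \cite[Theorem~5.1]{CL} to pass from the vanishing first variation to the martingale property and terminal value. The paper's proof is terser than yours---it simply cites \cite{CL} for the explicit differentiation, the integration by parts, and the separation argument you spell out---but the route is the same.
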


\begin{proof}
{We use the notation $\mu^{\epsilon,K}$ introduced in Remark \ref{rem:K}. Call $C(\mu)$ the cost function appearing in Problem \eqref{eq:MKVS}. We have $\lim_{\epsilon\to 0}\frac{C(\mu^{\epsilon,K})-C(\mu)}{\epsilon}\geq 0$ for all $K$. Now if $K$ preserves the filtration under $\bmu$, then the same is true for $-K$. Therefore $\lim_{\epsilon\to 0}\frac{C(\mu^{\epsilon,K})-C(\mu)}{\epsilon}= 0$.} To conclude the proof, we use $\alpha_t^{\bmu^{\epsilon,K}}(\bS+\epsilon K(\bS))=\alpha_t^{\bmu}(\bS)+\epsilon k_t(\bS)$ and similar arguments as in Theorem~5.1 in \cite{CL}.
\end{proof}

When \eqref{eq:mg}-\eqref{eq:tv} hold, we say that $\mu$ satisfies the \emph{martingale optimality condition}. The interest of this condition is that it is a clear stochastic counterpart to the classical Euler-Lagrange condition in calculus of variation, except for the fact that ``being equal to zero'' is replaced by ``being a martingale''; see \cite{LZ,CL}. 

\subsection{Optimal Transport Reformulation}
\label{Sec:transp}
In this section we formulate a variational transport problem on $\Ccal=\Ccal([0,T];\RR^d)$, the space of $\RR^d$-valued continuous paths, which is equivalent to finding the weak solutions of the extended Mean Field problem \eqref{eq:MKV}.  This variational formulation is a particular type of transport problem under the so-called \emph{causality} constraint{; see \cite{La,ABZ,BBEP,BBLZ}}. Here we recall this concept with respect to
the filtrations $\FF^1$ and $\FF^2$, generated by the first and by the second coordinate process on $\Ccal\times\Ccal$.
\begin{defn}\label{def causal}
Given $\bzeta_1,\bzeta_2\in\Pcal(\Ccal)$, a probability measure $\pi\in\Pcal(\Ccal\times\Ccal)$ is called a causal transport plan between $\bzeta_1$ and $\bzeta_2$ if its marginals are  $\bzeta_1$ and $\bzeta_2$, and, for any $t\in [0,T]$ and any set $A \in \Fcal^2_t$, the map $ \Ccal\ni x \mapsto \pi^x(A)
$ is $\tilde\Fcal^1_t$- measurable, where
$\pi^x(dy):=\pi(\{x\}\times dy)$ is a regular conditional kernel of $\pi$ w.r.t.\ the first coordinate, and $\tilde\FF^1$ is the completion of $\FF^1$ w.r.t.\ $\bzeta_1$. The set of causal transport plans between $\bzeta_1$ and $\bzeta_2$ is denoted by $\Pi_{c}(\bzeta_1,\bzeta_2)$.
\end{defn}

The only transport plans that contribute to the variational formulation of the problem are those under which the difference of the the coordinate processes on the product space $\Ccal\times\Ccal$ is a.s.\ absolutely continuous with respect to Lebesgue measure.
We denote by $(\omega,\sw)$ the generic element on  $\Ccal\times\Ccal$, and we use $(\dot{\wideparen{\sw-\omega}})$ to indicate the density of the process $\sw-\omega$ with respect to Lebesgue measure when it exists, i.e. $$\sw_t-\omega_t=\sw_0-\omega_0+\int_0^t(\dot{\wideparen{\sw-\omega}})_s\, ds,\,\, t\in[0,T].$$  
In such case we write $\sw-\omega\ll\mathcal L$.
Moreover, we set $$\bgamma\,:=\, \text{ Wiener measure on $\Ccal$ started at 0},$$ and
$\Pi_c^{\ll}(\gamma,\cdot):=\left\{\pi\in\mathcal P(\Ccal\times\Ccal):\, \pi(d\omega\times \Ccal)=\gamma(d\omega),\text{ and }\,\,\sw-\omega\ll\mathcal L,\, \pi\text{-a.s.}   \right\}$. 

We present the connection between extended Mean Field control and causal transport:
\begin{lem}\label{lm:CTMKV}Assume that $b(x,.,\mu)$ is injective, and set $$u_t(\omega,\sw,\mu):=b^{-1}(\sw_t,.,\mu)\big(
(\dot{\wideparen{\sw-\omega}})_t\big).$$ Then Problem \eqref{eq:MKV} is equivalent to:
\begin{eqnarray}\label{eq:VPMKV} 
\inf 
{\mathbb E}^\pi\left[\int_0^Tf\big(\sw_t,u_t(\omega,\sw,\mu_t^\pi),\cL_\pi(\sw_t,u_t(\omega,\sw,\mu_t^\pi))\big)dt+g(\sw_T,\mu^\pi_T)
\right],
\end{eqnarray}
where the infimum is taken over transport plans $\pi\in \Pi_c^{\ll}(\gamma,\cdot)$ such that $\text{Leb}_1\otimes\pi$-a.s.\ $ (\dot{\wideparen{\sw-\omega}} )_t \in b(\sw_t,\RR^d,\mu^\pi_t)$, and $\mu^\pi$ denotes the second marginal of $\pi$. 
\end{lem}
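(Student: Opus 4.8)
The plan is to establish a bijective correspondence between feasible tuples $(\Omega,\FF,\PP,\bW,\bX,\balpha)$ for Problem \eqref{eq:MKV} and transport plans $\pi\in\Pi_c^{\ll}(\gamma,\cdot)$ satisfying the stated admissibility constraint, in such a way that the two objective functionals agree. The heuristic is that $\omega$ plays the role of the driving Wiener path and $\sw$ the role of the controlled trajectory $\bX$: since $dX_t = b(X_t,\alpha_t,\cL_\PP(X_t))\,dt + dW_t$, the path $t\mapsto X_t - W_t$ is absolutely continuous, which is exactly the condition $\sw-\omega\ll\mathcal L$, and its density $(\dot{\wideparen{\sw-\omega}})_t$ equals $b(X_t,\alpha_t,\cL_\PP(X_t))$; inverting $b(x,\cdot,\mu)$ (which is possible by hypothesis) recovers $\alpha_t = b^{-1}(\sw_t,\cdot,\mu_t)\big((\dot{\wideparen{\sw-\omega}})_t\big) = u_t(\omega,\sw,\mu_t)$. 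Thus a feasible tuple, viewed on the canonical product space via $(\bW,\bX)$, induces a coupling $\pi := \cL_\PP(\bW,\bX)$ of $\gamma$ (for the first marginal, since $W_0 = 0$ and $\bW$ is a Brownian motion) and $\mu^\pi := \cL_\PP(\bX)$ (second marginal); and on this plan the running and terminal costs are literally the ones appearing in \eqref{eq:VPMKV}.

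The key steps, in order: (i) Starting from a feasible tuple, define $\pi := \cL_\PP(\bW,\bX)$ and verify $\pi(d\omega\times\Ccal) = \gamma(d\omega)$ and $\sw-\omega\ll\mathcal L$ with density $(\dot{\wideparen{\sw-\omega}})_t = b(\sw_t,\alpha_t,\mu^\pi_t)$, $\pi$-a.s., so that the admissibility constraint $(\dot{\wideparen{\sw-\omega}})_t\in b(\sw_t,\RR^k,\mu^\pi_t)$ holds; then observe that the objective value equals the expression in \eqref{eq:VPMKV} because $u_t(\omega,\sw,\mu^\pi_t) = \alpha_t$ by injectivity of $b(x,\cdot,\mu)$. (ii) Check that $\pi$ is causal: this is where the adaptedness of $\balpha$ and the Brownian-motion property of $\bW$ enter. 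The point is that, conditionally on the path $\bW = \omega$ up to time $t$, the trajectory $\bX$ up to time $t$ is determined by $\omega$ and the adapted control, so that $\Fcal^2_t$-measurable events have conditional probabilities that are $\tilde\Fcal^1_t$-measurable; more precisely one uses that $\bX_{\cdot\wedge t}$ is $\sigma(\bW_{\cdot\wedge t}) \vee \Fcal^{\balpha}_t$-measurable together with the independence of the Brownian increments after $t$, in the spirit of the characterization of causality via the conditional-independence / "Brownian motion remains a Brownian motion" property (cf.\ \cite{ABZ,BBEP}). (iii) Conversely, given an admissible $\pi\in\Pi_c^{\ll}(\gamma,\cdot)$, work on $(\Ccal\times\Ccal,\Fcal^1\vee\Fcal^2,\pi)$, set $\bW := \omega$, $\bX := \sw$, $\balpha_t := u_t(\omega,\sw,\mu^\pi_t)$ and $\mu^\pi := \cL_\pi(\sw)$; verify that $\bX$ solves the SDE with $X_0 = x_0$ (immediate from $\sw-\omega = \int_0^\cdot b(\sw_s,\alpha_s,\mu^\pi_s)\,ds$ and $\sw_0-\omega_0 = x_0$), that $\bW$ is a Brownian motion under $\pi$ with respect to the relevant filtration — again using causality, which guarantees that $\omega$ retains its Brownian character after enlarging by $\sw$ — and that $\balpha$ is progressively measurable. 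Then the objective of \eqref{eq:MKV} at this tuple equals the value of \eqref{eq:VPMKV} at $\pi$. Combining (i)--(iii) shows the two infima coincide.

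The main obstacle I expect is step (ii)/(iii), namely the precise handling of the causality constraint and its equivalence with the requirement that $\bW$ be a Brownian motion in the enlarged filtration $\FF^1\vee\FF^2$ (equivalently, that adding the second coordinate does not destroy the martingale property of $\omega$). One must be careful that "causal" in Definition \ref{def causal} is exactly the correct measure-theoretic encoding of "the control process is adapted in the weak formulation": the forward direction (feasible tuple $\Rightarrow$ causal plan) is the compatibility of the filtration generated by $\bW$ with that generated by $\bX$, while the backward direction requires showing that causality of $\pi$ lets one build a Brownian motion $\bW$ on the product space so that $\balpha$ is adapted to a filtration under which $\bW$ is Brownian — this is the classical but delicate equivalence between causal couplings and "weak adaptedness", and I would invoke the results of \cite{La,ABZ,BBEP,BBLZ} to make it rigorous rather than reprove it. A secondary, more routine, point is justifying measurability of $(\omega,\sw)\mapsto(\dot{\wideparen{\sw-\omega}})_t$ on the set where the density exists, and measurability of $u_t$, which follows from measurability of $b^{-1}$ in its arguments under the standing regularity assumptions on $b$.
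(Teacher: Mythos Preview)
Your proposal is correct and follows essentially the same two-direction correspondence as the paper's proof: push a feasible tuple to $\pi:=\cL_\PP(\bW,\bX)\in\Pi_c^{\ll}(\bgamma,\cdot)$ with matching cost, and conversely realize an admissible $\pi$ on $(\Ccal\times\Ccal,\pi)$ via $\bW=\omega$, $\bX=\sw$, $\alpha_t=u_t(\omega,\sw,\mu^\pi_t)$. The paper's own proof is considerably terser---it simply records that $\alpha_t=u_t(\bW,\bX,\cL_\PP(X_t))$ is $\FF^{\bX,\bW}$-adapted and states the two constructions without elaborating on causality---so your step (ii)/(iii) discussion of the ``$\omega$ remains Brownian in the enlarged filtration'' characterization is more explicit than what the paper provides, but not a different route.
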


\begin{proof}
Fix $(\Omega,\FF,\PP,\bW,\bX,\balpha)$  feasible tuple for \eqref{eq:MKV}, if it exists, and note that $\alpha_t=u_t(\bW,\bX,\cL_\PP(X_t))$ is $\FF^{\bX,\bW}$-adapted.
Then $\pi:=\cL_\PP(\bW,\bX)$ belongs to $\Pi_c^\ll(\bgamma,\cL_\PP(\bX))$ and generates the same cost in \eqref{eq:VPMKV}. Conversely, given a transport plan $\pi$ participating in \eqref{eq:VPMKV}, the following tuple $(\Omega,\FF,\PP,\bW,\bX,\balpha)$  is feasible for \eqref{eq:MKV}: $\Omega=\Ccal\times\Ccal$, $\FF$ canonical filtration on $\Ccal\times\Ccal$, $\PP=\pi$, $\bW=\omega$, $\bX=\sw$, and $\alpha_t=u_t(\omega,\sw,\mu^\pi_t)$.
\end{proof}

We illustrate in the next proposition the connection presented in the above lemma. We say that a feasible tuple for \eqref{eq:MKV} is a \textit{weak closed loop} if the control is adapted to the state (i.e.\ $\balpha$ is $\FF^{\bX}$-measurable).

\begin{prop}\label{prop wcl}
Assume
\begin{itemize}
\item[(A1)] $b(x,.,\mu)$ is injective, $b(x,\RR^k,\mu)$ is a convex set, and $b^{-1}(x,.,\mu)$ is convex;
\item[(A2)]  $f(x,b^{-1}(x,.,\mu),\xi)$ is convex {and grows at least like $\kappa_0+\kappa_1|\cdot|^p$ with $\kappa_1>0,p\geq 1$};
\item[(A3)] $f(x,\alpha,.)$ is $\prec_{cm}$-monotone.\footnote{A function $f:\Pcal(\RR^N)\to\RR$ is called $\prec_{cm}$-monotone (resp. $\prec_{c}$-monotone) if $f(m_1)\leq f(m_2)$ whenever $m_1\prec_{cm}m_2$ (resp. $m_1\prec_{c}m_2$). With the latter order of measures, we mean  $\int h dm_1\leq \int h dm_2$ for all functions $h$ which are convex and increasing w.r.t.\ the usual componentwise order in $\RR^N$ (resp.\ all convex functions $h$)  such that the integrals exist.}
\end{itemize}
Then the minimization in the extended Mean Field problem \eqref{eq:MKV}  can be taken over weak closed loop tuples. Moreover, if the infimum is attained, then the optimal control $\balpha$ is of weak closed loop form. 
\end{prop}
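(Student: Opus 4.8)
The plan is to exploit the transport reformulation of Lemma~\ref{lm:CTMKV}, together with the convexity structure encoded in (A1)--(A3), in order to replace an arbitrary causal plan $\pi$ by a plan for which the shifted path $\sw$ is a deterministic function of $\omega$ through only the information $\omega$ itself carries at each time --- i.e.\ to project onto the filtration generated by $\bX$. The guiding idea is a conditioning/Jensen argument in path space: given a feasible $\pi\in\Pi_c^{\ll}(\bgamma,\cdot)$, I would like to show that disintegrating and ``averaging out'' the Wiener coordinate $\omega$ given the state coordinate $\sw$ does not increase the cost.

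First I would reformulate: by Lemma~\ref{lm:CTMKV} the problem is $\inf_\pi {\mathbb E}^\pi[\int_0^T f(\sw_t,u_t(\omega,\sw,\mu^\pi_t),\cL_\pi(\sw_t,u_t(\omega,\sw,\mu^\pi_t)))\,dt + g(\sw_T,\mu^\pi_T)]$, and note that the state law $\mu^\pi$ (hence also $g(\sw_T,\mu^\pi_T)$) depends on $\pi$ only through its second marginal. So, fixing a feasible $\pi$ with second marginal $\bmu:=\mu^\pi$, the aim is to produce a new plan $\pi'$ with the same second marginal $\bmu$, which is still in $\Pi_c^{\ll}(\bgamma,\cdot)$ and still satisfies the range constraint $(\dot{\wideparen{\sw-\omega}})_t\in b(\sw_t,\RR^k,\mu_t)$, for which the control becomes $\FF^{\bX}$-measurable and the running cost does not increase. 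The natural candidate control is the conditional average $\bar\alpha_t(\sw):={\mathbb E}^\pi[\alpha_t\,|\,\sigma(\sw_s: s\le t)]$, and then $\pi'$ should be the law of $(\bar{\bW},\bS)$ where $\bS$ is driven by $d S_t = b(S_t,\bar\alpha_t(\bS),\mu_t)\,dt + d\bar W_t$ and $\bar{\bW}$ is the associated Brownian motion --- this automatically gives a weak closed loop tuple with the correct Brownian marginal. The key points to verify are: (i) with this $\bar\alpha$ the drift $(\dot{\wideparen{\sw-\omega}})_t = b(\sw_t,\bar\alpha_t(\sw),\mu_t)$ indeed lies in the required range (immediate, since $\bar\alpha_t$ is $\RR^k$-valued); (ii) $\bS$ has the \emph{same} law $\bmu$ as the original state process --- this is where causality/adaptedness of $\bar\alpha$ and the uniqueness of the drift-SDE weak solution given the state marginal flow must be invoked, so that $\mu^{\pi'}=\bmu$ and $g$ is unchanged; and (iii) the running cost decreases. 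For (iii) one writes $f(\sw_t,u_t,\cdot)=f(\sw_t,\alpha_t,\cdot)$ with $\alpha_t=b^{-1}(\sw_t,\cdot,\mu_t)((\dot{\wideparen{\sw-\omega}})_t)$, uses convexity of $f(x,b^{-1}(x,\cdot,\mu),\xi)$ in its second slot (A2) together with convexity of $b^{-1}$ and convexity of the range $b(x,\RR^k,\mu)$ (A1) to apply conditional Jensen in the $\alpha$-variable, obtaining ${\mathbb E}^\pi[f(\sw_t,\alpha_t,\xi)\mid \sigma(\sw_s:s\le t)]\ge f(\sw_t,\bar\alpha_t(\sw),\xi)$ for fixed law-argument $\xi$; then (A3), $\prec_{cm}$-monotonicity of $f(x,\alpha,\cdot)$ in the measure slot, handles the fact that passing from $\cL_\pi(\sw_t,\alpha_t)$ to $\cL_{\pi'}(\sw_t,\bar\alpha_t(\sw))$ lowers the value, because averaging the control conditionally on the state makes the joint law smaller in convex order (indeed $\cL(\sw_t,\bar\alpha_t)$ is a ``mean-preserving contraction'' of $\cL(\sw_t,\alpha_t)$ in the second coordinate, and the $\sw_t$-marginal is fixed, so $\cL_{\pi'}(\sw_t,\bar\alpha_t)\prec_{cm}\cL_\pi(\sw_t,\alpha_t)$). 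Integrating in $t$ and adding the unchanged terminal term gives $J(\pi')\le J(\pi)$, proving the infimum may be restricted to weak closed loop tuples, and the same chain of inequalities applied to an optimal $\pi$ shows a minimizer can be taken of weak closed loop form.

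The main obstacle I anticipate is step (ii): showing that replacing $\alpha$ by its conditional expectation $\bar\alpha$ given the state history does not change the law of the state process. This is delicate because $\bar\alpha_t$ is a genuinely different (coarser) control and the drift $b$ enters nonlinearly, so one cannot simply equate path laws; the correct statement is that the \emph{flow of marginals} $(\mu_t)_t$ is preserved, which follows from the fact that, under the original $\pi$, the state $\bS$ with its Brownian motion $\bW$ satisfies $dS_t = b(S_t,\alpha_t,\mu_t)\,dt + dW_t$ and that conditioning the drift on $\FF^{\bS}$ produces a weak solution of the \emph{closed-loop} SDE with the \emph{same} measure flow (a ``mimicking'' / Markovian-projection type argument in the spirit of Gyöngy, adapted to the causal setting and to the prescribed flow $(\mu_t)$). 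One must be careful that the relevant growth condition in (A2) ($\kappa_0+\kappa_1|\cdot|^p$, $\kappa_1>0$) guarantees integrability of $\bar\alpha$ and well-posedness of the projected SDE, and that causality of $\pi$ is exactly what makes $\bar\alpha_t$ adapted to $\FF^{\bS}_t$ rather than merely measurable. A secondary technical point is justifying the $\prec_{cm}$ comparison of joint laws under conditioning --- specifically that fixing the first marginal and taking a conditional expectation in the second coordinate yields a plan dominated in the increasing-convex order --- which I would phrase via a standard martingale-coupling / Strassen-type argument.
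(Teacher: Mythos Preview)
Your overall strategy---project onto the state filtration, apply Jensen, then use the $\prec_{cm}$-monotonicity (A3) for the law argument---is the same as the paper's. But there is a genuine misstep that turns your anticipated obstacle (ii) from a technicality into a real gap: you project the \emph{control} $\alpha$, whereas one should project the \emph{drift} $\beta_t=(\dot{\wideparen{\sw-\omega}})_t$. With $\bar\alpha_t:=\EE^\pi[\alpha_t\mid\FF^2_t]$, the process $\sw_t-x_0-\int_0^t b(\sw_s,\bar\alpha_s,\mu_s)\,ds$ is \emph{not} a martingale unless $b$ is linear in $\alpha$, because $\EE^\pi[b(\sw_t,\alpha_t,\mu_t)\mid\FF^2_t]\ne b(\sw_t,\bar\alpha_t,\mu_t)$ in general. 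So the same $\sw$ does not solve the projected SDE, and you are forced---as you recognise---into building a new state $\bS$ and arguing its marginals agree with $\mu_t$; but Gy\"ongy-type mimicking only matches one-dimensional marginals for \emph{Markovian} coefficients and says nothing about path-dependent drifts or the fixed-point coupling through $\mu_t$. Your Jensen step (iii) has the same defect: assumption (A2) is convexity of $\beta\mapsto f(x,b^{-1}(x,\beta,\mu),\xi)$, so conditional Jensen hands you $f(\sw_t,b^{-1}(\sw_t,\bar\beta_t,\mu_t),\xi)$ with $\bar\beta_t=\EE^\pi[\beta_t\mid\FF^2_t]$, not $f(\sw_t,\bar\alpha_t,\xi)$; nothing in (A1)--(A2) makes $\alpha\mapsto f(x,\alpha,\xi)$ convex.

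The paper's resolution is precisely to take the optional projection of $\beta$ onto $\FF^2$. Then $M_t:=\sw_t-x_0-\int_0^t\bar\beta_s(\sw)\,ds$ is an $(\bmu,\FF^2)$-martingale---this is where causality enters, not (as you suggest) to make $\bar\alpha$ adapted, but to guarantee that the first-coordinate Brownian motion $\omega$ remains a martingale in the enlarged filtration $\FF^1\vee\FF^2$ under $\pi$, so that $\EE^\pi[(\omega_t-\omega_s)h_s(\sw)]=0$. Since $\langle M\rangle_t=t$, L\'evy's characterisation makes $M$ a Brownian motion, and the \emph{same} state path $\sw$ on the \emph{same} space already satisfies $d\sw_t=\bar\beta_t(\sw)\,dt+dM_t$: the full law $\bmu$ is preserved automatically, no mimicking and no new SDE required. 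The closed-loop control is $\bar u_t:=b^{-1}(\sw_t,\cdot,\mu_t)(\bar\beta_t)$, well-defined by (A1); Jensen via (A2) gives the pointwise running-cost inequality, and the convexity of $b^{-1}$ in (A1) is what yields $\cL_\pi(\sw_t,\bar u_t)\prec_{cm}\cL_\pi(\sw_t,u_t)$, so (A3) disposes of the measure argument.
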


The proof follows the projection arguments used in \cite{ABZ}, which requires the above convexity assumptions. On the other hand, no regularity conditions are required here, unlike in the classical PDE or probabilistic approaches (see Assumptions (I)-(II) in Section \ref{sect:Pontryagin}). We refer to \cite{La17} for a similar statement, in a general framework, but under no non-linear dependence on the control law. This proof is postponed to Appendix \ref{sec app proof}.

\begin{rem}
If $b$ is linear with positive coefficient for $\alpha$, then (A3) can be weakened:
\begin{itemize}
\item[(A3')] $f(x,\alpha,.)$ is $\prec_{c}$-monotone,
\end{itemize} 
as can be seen from the proof. For example, conditions (A1),(A2),(A3') are satisfied if
\[
b(x,\alpha,\mu)=c_1x+c_2\alpha+c_3\bar\mu,\qquad f(x,\alpha,\xi)=d_1x+d_2\alpha+d_3x^2+d_4\alpha^2+J(\bar\xi_1,\bar\xi_2),
\]
where $J$ is a measurable function, $\bar\mu=\int x\mu(dx)$, $\bar\xi_1=\int\int x\xi(dx,d\alpha)$, $\bar\xi_2=\int\int \alpha\xi(dx,d\alpha)$, and $c_i,d_i$ are constants such that $c_2\neq 0$, $d_4/c_2>0$.
\end{rem}

\subsection{A Transport-Theoretic Discretization Scheme}
\label{subsec transport discretization}

In this part we specialize the analysis to the following particular case of \eqref{eq:MKV}:
\begin{eqnarray}\label{eq:MKV special}
\begin{split}
&&\inf_{\PP,\balpha}\left\{\int_0^1 f(\cL_\PP(\alpha_t))dt+g(\cL_\PP(X_T)): \,\,\,\, dX_t=
\alpha_tdt+dW_t\, ,\, X_0=x_0\right\},
\end{split}
\end{eqnarray}
where for simplicity we took $T=1$.  Throughout this section we assume:

\begin{enumerate}
\item $g$ is bounded from below and {lower semicontinuous w.r.t.\ weak convergence};
\item $f$ is increasing with respect to convex order, lower semicontinuous w.r.t.\ weak convergence, and such that for all
$\lambda\in [0,1]$ and $\R^N$-valued random variables $Z,\bar{Z} $:
\begin{align}\label{eq convex in law}
f(\cL(\lambda Z + (1-\lambda) \bar{Z}))\leq \lambda f(\cL(Z)) +(1-\lambda) f(\cL(\bar{Z}));
\end{align}
\item $f$ satisfies the growth condition $f(\rho)\geq a + b \int |z|^pd\rho(z) $ for some $a\in\R,b>0,p>1$.
\end{enumerate}

Lemma \ref{lm:CTMKV} shows the equivalence of \eqref{eq:MKV special} with the variational problem
\begin{align*}   \textstyle
\inf_{\pi\in\Pi_{c}^\ll(\bgamma,\cdot)}
\left \{ \int_0^1 f\big(\cL_\pi( \dot{\wideparen{\sw-\omega}}  )_t\big)dt+g(\cL_\pi(\sw_T))
\right\}.
\end{align*}
Under the convention that $\int_0^1 f\big(\cL_\pi( \dot{\wideparen{\sw-\omega}}  )_t\big)dt=+\infty$ if $\sw-\omega \ll\cL$ fails under $\pi$, the latter can be expressed in the equivalent form:
\begin{align}\label{eq:VPMKV special}\tag{$P$}
\inf_{\bmu\in\tilde\Pcal}\,   
\inf_{\pi\in\Pi_{c}(\bgamma,\bmu)}
\left \{ \int_0^1 f\big(\cL_\pi( \dot{\wideparen{\sw-\omega}}  )_t\big)dt+g(\cL_\pi(\sw_T))
\right\},
\end{align}
where $\tilde\Pcal$ was defined in \eqref{eq set S}. In the same spirit as \cite[Ch.\ 3.6]{Z}, we introduce a family of causal transport problems in finite dimension increasing to \eqref{eq:VPMKV special}. For $n\in \mathbb{N}$, let $\mathbb{T}_n:=\{i\,2^{-n}\,:\, 0\leq i\leq 2^n,\, i\in\mathbb{N}\}$ be the $n$-th generation dyadic grid. For measures $m\in\mathcal{P}(\Ccal)$ and $\pi\in\Pcal(\Ccal\times \Ccal)$, we write
$$m_n := \cL_m(\{\omega_t\}_{t \in \mathbb{T}_n})\in\Pcal(\RR^{ (2^n+1) d}) \,\, \text{ and }\,\,\pi_n:=  \cL_\pi(\{(\omega_t,\sw_t)\}_{t \in \mathbb{T}_n})\in\Pcal(\RR^{ (2^n+1) d}\times \RR^{ (2^n+1) d})  ,$$
the projections of $m$ and $\pi$ on the grid $\mathbb{T}_n$. 
We denote by $(x_0^n,x_1^n,\dots,x_{2^n}^n,y_0^n,y_1^n,\dots,y_{2^n}^n)$ a typical element of $\RR^{ (2^n+1) d}\times \RR^{ (2^n+1) d}$, and let $\Delta^n x_i :=x_{i+1}^n-x^n_i$, and similarly for $\Delta^n y_i$.

We consider the auxiliary transport problems
\begin{align}\tag{$P(n)$}\label{eq:VPMKV nth}
\inf_{\bmu\in\Pcal(\RR^{ (2^n+1) d}) }\,   
\inf_{\pi\in\Pi_{c}^n(\bgamma_n,\bmu)}
\left \{ 2^{-n} \sum_{i=0}^{2^n-1} f\left(\cL_\pi\left ( \frac{ \Delta^n y_i  -  \Delta^n x_i}{2^{-n}} \right )\right) +g(\cL_\pi(y_{2^n}^n))
\right\},
\end{align}
where, in analogy to Definition \ref{def causal}, we called
$$\Pi_{c}^n(\bgamma_n,\bmu)\subset \Pcal(\RR^{ (2^n+1) d}\times \RR^{ (2^n+1) d})$$  the set of causal couplings in $\Pcal(\RR^{ (2^n+1) d}\times \RR^{ (2^n+1) d})$ with marginals $\bgamma_n$ and $\bmu$; see \cite{BBLZ}.

\begin{thm}
\label{thm approx}
Suppose Problem \eqref{eq:VPMKV special} is finite, and that (i),(ii),(iii) hold. Then the value of the auxiliary problems \eqref{eq:VPMKV nth} increases to the value of the original problem \eqref{eq:VPMKV special}, {and the latter admits an optimizer.}
\end{thm}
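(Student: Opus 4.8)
The plan is to establish the theorem in two movements: first a monotonicity/lower-bound statement ($\liminf_n \mathrm{val}(P(n)) \geq \mathrm{val}(P)$ together with $\mathrm{val}(P(n)) \leq \mathrm{val}(P(n+1)) \leq \mathrm{val}(P)$), and then existence of an optimizer for $(P)$, which will in fact come out of the same compactness argument that proves the lower bound. The growth assumption (iii) is what makes everything work: it gives uniform $p$-th moment bounds on the controls (equivalently on the drift densities), which via the SDE $dX_t = \alpha_t\,dt + dW_t$ translates into tightness on path space.

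\textbf{Step 1: the easy inequality $\mathrm{val}(P(n)) \le \mathrm{val}(P)$ and monotonicity.} Given any $\bmu \in \tilde\Pcal$ and any causal $\pi \in \Pi_c(\bgamma,\bmu)$ with finite cost, its dyadic projection $\pi_n$ is a causal coupling in $\Pi_c^n(\bgamma_n,\bmu_n)$ (causality is preserved under projection onto a sub-grid). The piecewise-constant approximation of the drift, $\frac{\Delta^n y_i - \Delta^n x_i}{2^{-n}} = 2^n\int_{i2^{-n}}^{(i+1)2^{-n}} (\dot{\wideparen{\sw-\omega}})_s\,ds$, is a conditional average of $(\dot{\wideparen{\sw-\omega}})_s$; by Jensen together with the convexity-in-law property \eqref{eq convex in law} and the monotonicity of $f$ w.r.t.\ convex order (condition (ii)), one gets $2^{-n}\sum_i f(\cL_\pi(2^n\int \cdots)) \le \int_0^1 f(\cL_\pi((\dot{\wideparen{\sw-\omega}})_s))\,ds$. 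The terminal term $g(\cL_\pi(y_{2^n}^n)) = g(\cL_\pi(\sw_1))$ is unchanged. Hence $\mathrm{val}(P(n)) \le \mathrm{val}(P)$, and the same Jensen argument applied between generation $n$ and $n+1$ (nesting of dyadic grids, tower property of conditional expectation) gives $\mathrm{val}(P(n)) \le \mathrm{val}(P(n+1))$. So the left-hand side is an increasing sequence bounded above by $\mathrm{val}(P)$.

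\textbf{Step 2: the limit inequality and existence, via compactness.} Take near-optimizers $\pi^n \in \Pi_c^n(\bgamma_n,\bmu^n)$ of $(P(n))$; by the growth condition their discrete drift densities have uniformly bounded $L^p(\mathrm{Leb}\otimes\pi^n)$-norm (using that $\mathrm{val}(P(n))$ is bounded and $g$ bounded below). Extend each $\pi^n$ to a measure $\hat\pi^n$ on $\Ccal\times\Ccal$ by letting the first coordinate be Wiener measure and the second coordinate follow the piecewise-constant (or piecewise-linear) interpolation of the discrete drift on the grid $\TT_n$; the uniform $L^p$ drift bound with $p>1$ gives, via the SDE structure and e.g.\ Aldous' tightness criterion or a direct modulus-of-continuity estimate, that $\{\hat\pi^n\}$ is tight on $\Ccal\times\Ccal$. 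Pass to a weakly convergent subsequence $\hat\pi^n \to \pi^\infty$. One must check: (a) $\pi^\infty \in \Pi_c^\ll(\bgamma,\cdot)$ — the first marginal is Wiener by construction; causality is weakly closed (this is a known fact, see \cite{BBLZ,BBEP}, obtainable as a limit of the finite-dimensional causality constraints); and $\sw - \omega \ll \mathcal L$ with a drift density that is a weak-$L^p$ limit of the discrete ones, using the uniform $L^p$ bound and lower semicontinuity of $p$-th moments; (b) lower semicontinuity of the objective: $g$ is l.s.c.\ w.r.t.\ weak convergence by (i), and $\int_0^1 f(\cL_{\pi^\infty}((\dot{\wideparen{\sw-\omega}})_t))\,dt \le \liminf_n \big[\text{discrete } f\text{-cost of }\pi^n\big]$ by combining l.s.c.\ of $f$ (condition (ii)) with a Jensen/convex-order step to compare the true limiting drift against the Cesàro/conditional averages appearing in the discretization, plus Fatou in $t$. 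This yields $\mathrm{val}(P) \le \text{cost of }\pi^\infty \le \liminf_n \mathrm{val}(P(n))$, which combined with Step 1 gives convergence; and since $\pi^\infty$ is admissible for $(P)$ and attains a value $\le \mathrm{val}(P)$, it is an optimizer.

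\textbf{Main obstacle.} The delicate point is the lower-semicontinuity of the running cost under the discretization-and-limit procedure: the functional $\rho \mapsto f(\rho)$ acts on the \emph{law} of the drift at each time $t$, the discrete problems only see time-averaged drifts on dyadic intervals, and weak convergence of $\hat\pi^n$ does not immediately control the laws of the instantaneous densities $(\dot{\wideparen{\sw-\omega}})_t$. Handling this requires interchanging the (reverse) Jensen inequality \eqref{eq convex in law}, convex-order monotonicity, l.s.c.\ of $f$, and a Fatou argument in the time variable in just the right order — essentially showing that the discrete cost dominates, up to $o(1)$, a quantity that is l.s.c.\ along the convergent subsequence and whose limit bounds the continuous cost from below. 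Establishing that causality passes to the weak limit at the level of $\Ccal\times\Ccal$ (rather than only on each fixed grid) is the second technical hurdle, but this is by now standard in the causal-transport literature and can be quoted.
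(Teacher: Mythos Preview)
Your Step 1 (upper bound and monotonicity) is fine and matches the paper's Steps 1--2.

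The gap is in Step 2: the extension $\hat\pi^n$ you build is \emph{not} causal on $\Ccal\times\Ccal$. If the second coordinate carries the piecewise-constant drift $2^n(\Delta^n y_i-\Delta^n x_i)$ on $[i2^{-n},(i+1)2^{-n})$, then since $\Delta^n x_i=\omega_{(i+1)2^{-n}}-\omega_{i2^{-n}}$ this drift anticipates the Brownian value at the right endpoint; the coupling fails path-space causality at every non-grid time. Your appeal to ``causality is weakly closed'' therefore does not apply as stated, because the approximating sequence is not in the closed set to begin with. The paper handles this by inserting an auxiliary path-space problem $(P^{aux}(n))$ with the discrete cost functional but optimized over genuine causal couplings in $\Pi_c(\bgamma,\cdot)$, and proving $(P^{aux}(n))=(P(n))$; the nontrivial direction --- lifting a discrete causal coupling to a continuous-time \emph{causal} one with the same grid law --- is done via an adapted Brownian-bridge SDE of the form $dX_t=\frac{\Psi(U,W_t,X_0)-X_t}{1-t}\,dt+dW_t$, iterated over subintervals (Remark~\ref{rem SDE} and Lemma~\ref{lem SDE}). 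This construction is the missing idea. One could alternatively rescue your route by verifying that the $\TT_m$-projections of $\hat\pi^n$ are causal at level $m$ for all $n\ge m$ (they are, since they coincide with the sub-grid projection of the discrete $\pi^n$), and then characterizing path-space causality of the limit through all dyadic projections; but that is not what you wrote, and in any case the adapted-lifting step is where the actual work lies.

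On the lower-semicontinuity obstacle you flag, the paper avoids a direct Fatou-in-$t$ argument by first proving the monotone pointwise convergence $c^n\nearrow c$ via the Lebesgue differentiation theorem, and then using that each fixed $c^k$ depends only on finitely many time marginals and is hence weakly l.s.c.; this gives $c^k(\pi)\le\liminf_n c^k(\pi^n)\le\liminf_n c^n(\pi^n)$ for every $k$, after which one sends $k\to\infty$. This two-layer argument is cleaner than trying to control the instantaneous drift laws directly along the weak limit.
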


\begin{rem}
An example of a function satisfying Conditions (ii)-(iii) of Theorem \ref{thm approx} is
$f(\rho)= R\left(\int hd\rho\right)$, for $R$ convex and increasing, and $h$ convex with $p$-power growth ($p>1$). It also covers the case of functions of the form
 $f(\rho)= \int \phi(w,z)d\rho(w)d\rho(z) + \int |x|^p d\rho(x)$, with $\phi$ jointly convex and bounded from below, and $f(\rho) = \mathrm{Var}(\rho)+\int |x|^p d\rho(x)$,
 where in both cases $p>1$. For $p=2$ the latter falls into the LQ case of Section \ref{sub:mkv_LQ}.
\end{rem}

\begin{proof}
{\bf Step 1} (Lower bound). Let $\bmu \in \tilde{\Pcal}$ and $\pi\in\Pi_c(\bgamma,\bmu)$ with finite cost for Problem \eqref{eq:VPMKV special}, $n\in\mathbb{N}$,
 and $\pi_n$ be the projection of $\pi$ onto the grid $\mathbb{T}_n$. We first observe that
\begin{align}
\label{eq low bound}\int_0^1 f\big(\cL_\pi( \dot{\wideparen{\sw-\omega}}  )_t\big)dt+g(\cL_\pi(\sw_T)) \geq  2^{-n} \sum_{i=0}^{2^n-1} f\left(\cL_{\pi_n}\left ( \frac{ \Delta^n y_i - \Delta^n x_i}{2^{-n}} \right )\right) +g(\cL_{\pi_n}(y^n_{2^n})).
\end{align}
Indeed, for $i\in\{0,\dots,2^n-1\}$ we have
\begin{align*}
\int_{i2^{-n}}^{(i+1)2^{-n}} f\big(\cL_\pi( \dot{\wideparen{\sw-\omega}}  )_t\big)dt 
&\geq 2^{-n} f\left(\cL_\pi\left(\int_{i2^{-n}}^{(i+1)2^{-n}}( \dot{\wideparen{\sw-\omega}} )_t \,\frac{dt}{2^{-n}}\right)\right)\\
& =  2^{-n} f\left(\cL_\pi\left( \frac{\sw_{(i+1)2^{-n}}   - \sw_{i2^{-n}}-(\omega_{(i+1)2^{-n}}   - \omega_{i2^{-n}}) }{2^{-n}}\right)\right)\\
& = 2^{-n} f\left(\cL_{\pi_n}\left( \frac{\Delta^n y_i - \Delta^n x_i }{2^{-n}}\right)\right),
\end{align*}
where for the inequality we used the convexity condition \eqref{eq convex in law}. Noticing that the first marginal of $\pi_n$ is equal to $\bgamma_n$, the r.h.s.\ of \eqref{eq low bound} is bounded from below by the value of \eqref{eq:VPMKV nth}. Because $\bmu,\pi$ have been chosen having finite cost for Problem \eqref{eq:VPMKV special}, but otherwise arbitrary, we conclude that
\begin{align*}
\eqref{eq:VPMKV special}\geq \eqref{eq:VPMKV nth}\quad \forall n\in\NN.
\end{align*}

\noindent{\bf Step 2} (Monotonicity). For $n\in\mathbb{N}$ and $i\in\{0,\dots,2^n-1\}$, take $k$ such that
$$i 2^{-n} = (k-1)2^{-(n+1)}<k2^{-(n+1)}< (k+1)2^{-(n+1)}= (i+1) 2^{-n}.$$
Let $\bmu_{n+1}\in\Pcal(\RR^{ (2^{n+1}+1) d}) $ and $\pi_{n+1}\in\Pi_{c}^{n+1}(\bgamma_{n+1},\bmu_{n+1})$. By \eqref{eq convex in law} we get
\begin{align*}& \textstyle
2^{-(n+1)}\left\{ f\left(\cL_{\pi_{n+1}}\left ( \frac{ \Delta^{n+1} y_{k-1}  - \Delta^{n+1} x_{k-1} }{2^{-(n+1)}} \right )\right) + f\left(\cL_{\pi_{n+1}}\left (  \frac{\Delta^{n+1} y_{k}  - \Delta^{n+1} x_{k} }{2^{-(n+1)}} \right )\right)
\right \} \\
\geq & \textstyle \, 2^{-n}f\left(\cL_{\pi_{n+1}}\left (\frac{ y_{k+1}^{n+1}-y_{k-1}^{n+1}-(x_{k+1}^{n+1}-x_{k-1}^{n+1})}{2^{-n}} \right )\right) \, = \, 2^{-n}f\left(\cL_{\pi_{n}}\left ( \frac{\Delta^n y_{i}-\Delta^n x_{i}}{2^{-n}} \right )\right),
\end{align*}
where $\pi_n$ is the projection of $\pi_{n+1}$ on the grid $\mathbb{T}_n$. We conclude as in the previous step
\begin{align*}
(P(n+1))\geq \eqref{eq:VPMKV nth}\quad \forall n\in\NN.
\end{align*}

\noindent{\bf Step 3} (Discrete to Continuous). We introduce the auxiliary problems in path-space
\begin{align}\label{eq:VPMKV special aux}\tag{$P^{aux}(n)$}\textstyle
\inf_{\bmu\in\tilde\Pcal}\,   
\inf_{\pi\in\Pi_{c}(\bgamma,\bmu)}
\left \{ 2^{-n} \sum_{i=0}^{2^n-1} f\left(\cL_\pi\left ( \frac{ \Delta^n_i \sw  - \Delta^n_i \omega}{2^{-n}} \right )\right) +g(\cL_\pi(\sw_1))
\right\},
\end{align}
where $\Delta^n_i \omega:= \omega_{(i+1)2^{-n}} -\omega_{i2^{-n}} $ and likewise for $\Delta^n_i \sw$. We now prove that 
\begin{align}\label{eq:equality discrete to continuous}
\eqref{eq:VPMKV special aux} = \eqref{eq:VPMKV nth}\quad \forall n\in\NN.
\end{align}
First we observe that the l.h.s.\ of \eqref{eq:equality discrete to continuous} is larger than the r.h.s. Indeed, projecting a coupling from $\Pi_c(\bgamma,\cdot)$ onto a discretization grid gives again a causal coupling; see \cite[Lemma 3.5.1]{Z}. 
The converse inequality follows by Remark~\ref{rem SDE}, since it implies in particular that for any $\bnu\in\Pcal(\RR^{ (2^n+1) d})$ and $\pi\in\Pi_{c}^n(\bgamma_n,\bnu)$ with finite cost in \eqref{eq:VPMKV nth}, there exist $\bmu\in\tilde\Pcal$ and $P\in\Pi_{c}(\bgamma,\bmu)$ giving the same cost in \eqref{eq:VPMKV special aux}.\\
{\bf Step 4} (Convergence). Let us denote  $$ \textstyle c(\pi):= \int_0^1 f\big(\cL_\pi( \dot{\wideparen{\sw-\omega}}  )_t\big)dt\,\, \text{ and }\,\, c^n(\pi):=2^{-n} \sum_{i=0}^{2^n-1} f\left(\cL_\pi\left ( \frac{ \Delta^n_i \sw  - \Delta^n_i \omega}{2^{-n}} \right )\right), $$ the cost functionals defining the optimization problems \eqref{eq:VPMKV special} and \eqref{eq:VPMKV special aux}. Notice that Step 1 implies that $c\geq c^n$, and Step 2 shows that $c^n$ is increasing.  We now show that $c^n$ converges to $c$ whenever the latter is finite. For this it suffices to show
  \begin{align}\label{eq to do Leb diff}
  \liminf_n c^n(\pi)\geq c(\pi). 
  \end{align}
 We start by representing $c^n$ in an alternative manner, namely
 $$ c_n(\pi)=\int_0^1 f\left ( \,\Lcal_\pi \left (\int_{\lfloor t2^n\rfloor 2^{-n}}^{(\lfloor t2^n\rfloor +1) 2^{-n}} (\dot{\wideparen{\sw-\omega}})_s \frac{ds}{2^{-n}} \right )\, \right )  dt.$$ 
 By Lebesgue differentiation theorem, for each pair $(\sw,\omega)$ such that $\sw-\omega$ is absolutely continuous, there exists a $dt$-full set of times such that
 \begin{align}\label{eq leb diff}
A(t,n):= \int_{\lfloor t2^n\rfloor 2^{-n}}^{(\lfloor t2^n\rfloor +1) 2^{-n}} (\dot{\wideparen{\sw-\omega}})_s \frac{ds}{2^{-n}}\rightarrow (\dot{\wideparen{\sw-\omega}})_t . 
 \end{align}
 If $c(\pi)<\infty$, the set of such pairs $(\sw,\omega)$ is $\pi$-full. This shows that \eqref{eq leb diff} holds $\pi(d\omega,d\sw)dt$-a.s. By Fubini theorem, there is a $dt$-full set of times $I\subset [0,1]$ such that, for $t\in I$, the limit \eqref{eq leb diff} holds in the $\pi$-almost sure sense (the $\pi$-null set depends on $t$ a priori). By dominated convergence, this proves that
 $$\textstyle \forall t\in I:\,\, \Lcal_\pi \left (  A(t,n) \right )  \Rightarrow \Lcal_\pi \big ( (\dot{\wideparen{\sw-\omega}})_t  \big),$$
 namely in the sense of weak convergence of measures. By lower-boundedness and lower-semicontinuity of $f$, together with Fatou's Lemma, we obtain
 $$\textstyle \liminf_n c^n(\pi)\geq \int_0^1  \liminf_n f\left (\, \Lcal_\pi \left ( A(t,n)\right )   \, \right )dt = \int_0^1  f\big(\, \Lcal_\pi \big ( (\dot{\wideparen{\sw-\omega}})_t  \big )  \, \big ) dt, $$
 establishing \eqref{eq to do Leb diff} and so that $c^n\nearrow c$.

By Steps 2 and 3, we know that the values of \eqref{eq:VPMKV special aux} are increasing and bounded from above by the value of \eqref{eq:VPMKV special}. We take $\pi^n$ which is $1/n$-optimal for \eqref{eq:VPMKV special aux}. It follows then by Assumptions (i)-(iii) that 
$\int \int_0^1[ (\dot{\wideparen{\sw-\omega}})_t]^p \,dt\,d\pi^n \leq \bar{a}+\bar{b}\eqref{eq:VPMKV special}$,
for some $\bar a, \bar b \in \RR$. 
  By \cite[Lemma 3.6.2]{Z} we obtain the tightness of $\{\pi^n\}_n$. We may thus assume that $\pi_n\Rightarrow \pi$ weakly. By \cite[Lemma 5.5]{ABZ} the measure $\pi$ is causal (and it obviously has first marginal $\gamma$). For $k\leq n$ we have 
  $$ \textstyle c^k(\pi^n) \leq c^n(\pi^n) \leq 1/n + \eqref{eq:VPMKV special aux}\leq 1/n + \eqref{eq:VPMKV special} , $$
  so sending $n\to \infty$ we get $c^k(\pi)\leq \lim_n\eqref{eq:VPMKV special aux}\leq \eqref{eq:VPMKV special}$, as clearly $c^k$ is lower semicontinuous. By letting $k\to \infty$,  and using the fact that $c^k\nearrow c$, we conclude that $\pi$ is optimal for \eqref{eq:VPMKV special} and that the value of \eqref{eq:VPMKV special} is the limit of the increasing values of \eqref{eq:VPMKV special aux}, which in turn equals the  limit of the increasing values of \eqref{eq:VPMKV nth}.
\end{proof}

{We complete the argument used in Step 3 with the following remark. This also shows how, from an (approximate) optimizer of the discrete-time problem \eqref{eq:VPMKV nth}, an approximate optimizer of the continuous-time problem \eqref{eq:MKV special} can be built.
\begin{rem}\label{rem SDE}
In Lemma~\ref{lem SDE} we show how, given $\bnu\in\Pcal(\mathbb{R}^{2d})$ and $\pi\in\Pi_c^0(\bgamma_0,\bnu)$, there exists a weak solution $(W,X)$ of an SDE such that
$\Lcal_P(W_0,W_1,X_0,X_1)=\pi$. The argument used to prove Lemma~\ref{lem SDE} can be iterated in order to get an SDE whose unique weak solution fits any joint distribution over finitely many time-points: For any given $\bnu\in\Pcal(\RR^{ (2^n+1) d})$ and $\pi\in\Pi_{c}^n(\bgamma_n,\bnu)$, there exist $\bmu\in\tilde\Pcal$ and $P\in\Pi_{c}(\bgamma,\bmu)$ such that
$$\Lcal_P(\omega_0,\omega_{2^{-n}},\omega_{2^{-n+1}},\dots,\omega_1,\sw_0,\sw_{2^{-n}},\sw_{2^{-n+1}},\dots,\sw_1)=\pi,$$
with $P$ being the joint law of $(W,X)$, the unique weak solution of an SDE of the form
$$dX_t = \beta_t dt + dW_t.$$
Lemma~\ref{lem SDE} covers the case $n=0$. We now show the case $n=1$, the general case following similarly.
Fix $\bnu\in\Pcal(\RR^{3d})$ and $\pi\in\Pi_{c}^1(\bgamma_1,\bnu)$. As in Lemma~\ref{lem SDE}, if $U_1$ is a $d$-dimensional uniform distribution independent of $X_0$ and the Brownian motion $W$, then there exists $\Psi_1$ such that $(0,W_{1/2},X_0,\Psi_1(U_1,W_{1/2},X_0))\sim \pi_1$, where $\pi_1$ is the projection of $\pi$ into the first $4$ coordinates. Introducing $U_2$ an independent copy of $U_1$, we can apply Lemma \ref{lem selection applica} in the Appendix, obtaining the existence of a measurable function $\Psi_2$ such that
$$\Bigl (\,0,W_{1/2},W_1,X_0,\Psi_1(U_1,W_{1/2},X_0),\Psi_2\bigl (\,U_2,W_{1/2},W_1,X_0,\Psi_1(U_1,W_{1/2},X_0) \,\bigr)\,\Bigr )\sim \pi.$$
Now we define the following SDE with initial condition $X_0$:
$$\textstyle dX_t = \left(\frac{\Psi_1(U_1,W_t,X_0)-X_t}{1/2-t}1_{[0,1/2)}(t)+\frac{\Psi_2(U_2,W_{1/2},W_t,X_0,X_{1/2})-X_t}{1-t}1_{[1/2,1)}(t)\right)dt + dW_t.$$
There is a unique solution in $[0,1)$, which is given by
\begin{align*}
X_t &\textstyle = X_0(1-2t)1_{[0,1/2]}(t)+X_{\frac12}(2-2t)1_{(1/2,1)}(t)\\
&\textstyle + \left(\frac12-t\wedge\frac12\right)\int_0^{t\wedge 1/2} \frac{\Psi_1(U_1,W_s,X_0)}{(1/2-s)^2}ds +(1-t)\int_{t\wedge 1/2}^t \frac{\Psi_2(U_2,W_{1/2},W_s,X_0,X_{1/2})}{(1-s)^2}ds\\
&\textstyle +\left(\frac12-t\wedge\frac12\right)\int_0^{t\wedge 1/2} \frac{1}{1/2-s}dW_s
 +(1-t)\int_{t\wedge 1/2}^t \frac{1}{1-s}dW_s.
\end{align*}
Noting $X_{\frac12-}=\Psi_1(U_1,W_{1/2},X_0)$ and $X_{1-}=\Psi_2(U_2,W_{1/2},W_1,X_0,X_{1/2})$ we conclude.
\end{rem}
}

\appendix

\section{Proof of Proposition \ref{prop wcl}}
\label{sec app proof}

\begin{proof}
Fix $(\Omega,\FF,\PP,\bW,\bX,\balpha)$  feasible tuple for \eqref{eq:MKV}, if it exists, and set $\pi:=\cL_\PP(\bW,\bX)\in\Pi_c^\ll(\bgamma,\cdot)$ and $\bmu:=\bmu^\pi$. Under $\pi$ we have $\sw_t-\omega_t=x_0+\int_0^t\beta_sds$ for some progressive $\beta$. By (A2), the optional projection of $\beta$ w.r.t.\ $\left(\pi,\{\emptyset,\Ccal\}\times\FF^2\right)$, which we call $\bar\beta$, is well defined. As in \cite{ABZ}, one can prove that the process $M_t:=\sw_t-x_0-\int_0^t\bar\beta_s(\sw)ds$ is a $(\bmu,\FF^2)$-martingale. Indeed, taking
$0\leq s<t\leq T$ and $h_s\in L^\infty(\FF^2_s)$, we have
\begin{eqnarray*}
{\mathbb E}^{\bmu} [(M_t-M_s)h_s(\sw)] &=&  \textstyle {\mathbb E}^{\pi} [(\omega_t-\omega_s)h_s(\sw)] + {\mathbb E}^{\pi} \big[\int_{s}^{t}\big((\dot{\wideparen{\sw-\omega}})_r
-\bar\beta_r(\sw)\big)dr h_s(\sw)\big]\\
&=& \textstyle {\mathbb E}^{\pi} \big[\int_{s}^{t}{\mathbb E}^{\pi}\big[
\big((\dot{\wideparen{\sw-\omega}})_r
-\bar\beta_r(\sw)\big)|\FF^2_r\big]dr h_s(\sw)\big] \,=\, 0,
\end{eqnarray*}
where the third equality follows since $\omega$, which is a $(\bgamma,\FF^1)$-martingale, is consequently by causality a $(\pi, \FF^1\otimes \FF^2)$-martingale. Therefore $\bM$ is a $(\bmu,\FF^2)$-martingale, as claimed.

Since $\langle\bM\rangle_t=\langle\sw\rangle_t=t$ under $\bmu$, then $\bM$ is actually a $(\bmu,\FF^2)$-Brownian motion, by L\'evy's theorem.
This implies $\hat\pi:=
\cL_{\bmu}(\bM,\sw)\in\Pi_c(\bgamma,\bmu)$.
We are next going to show that the expectation in \eqref{eq:VPMKV} is smaller when considering $\hat\pi$ instead of $\pi$, i.e., when replacing $\beta=\dot{\wideparen{\sw-\omega}}$ with $\bar\beta$. Then, by taking $\Omega=\Ccal, \PP=\bmu, \FF=\FF^2, \bX=\sw$ and $\alpha=b^{-1}(\sw_t,.,\mu_t)(\bar\beta_t)$, we have a feasible tuple, which concludes the proof of the proposition.

Let us show our claim. Set $\bar{u}_t(\sw,\bmu):=b^{-1}(\sw_t,.,\mu_t)(\bar\beta_t)$ and note that, by (A2) and Jensen's inequality,
\[
f\left(\sw_t,\bar{u}_t(\sw,\bmu),\cL_\pi(\sw_t,\bar{u}_t(\sw,\bmu))\right)\leq {\mathbb E}^\pi\left[f\left(\sw_t,u_t(\omega,\sw,\mu_t),\cL_\pi(\sw_t,\bar{u}_t(\sw,\bmu))\right)|\FF^2_t\right].
\]
By taking expectation on both sides, integrating 
and using Fubini's theorem, we then get
\begin{eqnarray*}\textstyle
{\mathbb E}^\pi\left[\int_0^Tf\left(\sw_t,\bar{u}_t(\sw,\bmu),\cL_\pi(\sw_t,\bar{u}_t(\sw,\bmu))\right)dt\right]\hspace{3cm}\\ \textstyle
\leq {\mathbb E}^\pi\left[\int_0^Tf\left(\sw_t,u_t(\omega,\sw,\mu_t),\cL_\pi(\sw_t,\bar{u}_t(\sw,\bmu))\right)dt\right].
\end{eqnarray*}

We now establish some ordering between measures. For any measurable function $F:\Ccal\times\Ccal\to\RR$ and sigma-field $\sigma$, set $\bar F:={\mathbb E}^\pi[F|\sigma]$, and note that for any convex function $q:\RR\to\RR$, Jensen's inequality gives
$
\int q(x)d(\cL_\pi(\bar F))(x)={\mathbb E}^\pi[q(\bar F)]]\leq {\mathbb E}^\pi[q(F)]=\int q(x)d(\cL_\pi(F))(x),
$
i.e., $\cL_\pi(\bar F)\prec_c \cL_\pi(F)$. Analogously, for any convex function $H$, we have that $\cL_\pi(H(\bar F))\prec_{cm} \cL_\pi(H(F))$. By (A1) and (A3) this implies
\begin{eqnarray*}\textstyle
{\mathbb E}^\pi\left[\int_0^Tf\left(\sw_t,u_t(\omega,\sw,\mu_t),\cL_\pi(\sw_t,\bar{u}_t(\sw,\bmu))\right)dt\right]\hspace{3cm}\\ \textstyle
\leq {\mathbb E}^\pi\left[\int_0^Tf\left(\sw_t,u_t(\omega,\sw,\mu_t),\cL_\pi(\sw_t,\bar{u}_t(\sw,\bmu))\right)dt\right],
\end{eqnarray*}
which concludes our claim, and so the proof of the proposition.
\end{proof}

\section{Measurable selection of pushforwarding maps}

The next result is obvious in dimension one. In higher dimensions it could follow easily from Brenier's theorem in optimal transport, under assumptions relating to the finiteness of second-moments. We do not assume this, an therefore we need to be more careful. For the meaning of concepts such as $c$-cyclical monotonicity, we refer to \cite{V2}.

\begin{lem}
\label{lem selection applica}
 Let $Q$ be a probability measure on $\mathbb{R}^r\times \mathbb{R}^\ell$ and denote by $q$ the (joint) distribution of the first $r$-coordinates of $Q$.
 Then there exists a Borel measurable function $F:\mathbb{R}^r\times [0,1]^\ell \to \mathbb{R}^\ell$
 such that
 $(I,F)(q\otimes L ) = Q$,
 where $L$ denotes the restriction of $\ell$-dimensional Lebesgue measure to $[0,1]^\ell$, and $I:\mathbb{R}^r\times [0,1]^\ell \to \mathbb{R}^r$ is given by $I(x,y)=x$.
\end{lem}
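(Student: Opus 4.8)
The plan is to build $F$ by disintegrating $Q$ along the first $r$ coordinates and then, for each fixed $x\in\mathbb{R}^r$, applying a one-dimensional-style inverse-CDF construction in $\ell$ dimensions. Write $Q(dx,dz)=q(dx)\,Q^x(dz)$ for a regular conditional kernel $x\mapsto Q^x\in\Pcal(\mathbb{R}^\ell)$, which exists since $\mathbb{R}^r$ and $\mathbb{R}^\ell$ are Polish. For a fixed $x$, the task is to produce a Borel map $F_x:[0,1]^\ell\to\mathbb{R}^\ell$ with $(F_x)_*L = Q^x$, and then set $F(x,y):=F_x(y)$; the point $(I,F)(q\otimes L)=Q$ follows immediately by Fubini once the joint measurability in $(x,y)$ is in place. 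So there are really two things to do: (a) construct $F_x$ for each $x$ in a canonical way, and (b) check that $(x,y)\mapsto F_x(y)$ is jointly Borel.

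For step (a) I would use the standard ``Knothe--Rosenblatt'' coordinatewise quantile transform rather than invoking Brenier's theorem (which is why the remark warns about second moments): disintegrate $Q^x$ successively over its first coordinate, then the second given the first, and so on, obtaining one-dimensional conditional laws; at each stage use the right-continuous generalized inverse of the conditional cumulative distribution function, which pushes the uniform law on $[0,1]$ forward to the given one-dimensional conditional. Composing these $\ell$ quantile maps yields $F_x:[0,1]^\ell\to\mathbb{R}^\ell$ with $(F_x)_*L=Q^x$. Alternatively one can phrase (a) abstractly: any Borel probability on $\mathbb{R}^\ell$ is the pushforward of $L$ on $[0,1]^\ell$ under a Borel map, by the isomorphism theorem for standard Borel spaces together with an atomless-measure normalization; but the explicit quantile construction has the advantage of being manifestly canonical, which is exactly what makes the measurability in (b) tractable.

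For step (b), the key observation is that the generalized inverse CDF of a family of measures depending measurably on a parameter is itself jointly measurable: if $x\mapsto \rho^x$ is a Borel map into $\Pcal(\mathbb{R})$, then $(x,t)\mapsto \inf\{s:\rho^x((-\infty,s])\ge t\}$ is Borel, because it is a countable-sup/inf combination of the Borel maps $(x,s)\mapsto \rho^x((-\infty,s])$. Iterating this through the $\ell$ stages of the Knothe construction — at each stage the relevant conditional kernels depend measurably on $x$ together with the already-constructed coordinates, again by a disintegration that is measurable in its parameters — gives that $(x,y)\mapsto F_x(y)$ is Borel. Finally, rescaling is harmless: if one prefers to state the inverse-CDF construction on $\RR$ with $L$ = Lebesgue on $[0,1]$, composing with the obvious identifications keeps everything Borel, and one may compose with a fixed Borel isomorphism $[0,1]\to[0,1]^{\mathbb{N}}$ at the cost of only using finitely many coordinates.

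\textbf{Main obstacle.} The only genuinely delicate point is the joint measurability in (b): one must be careful that each successive disintegration $Q^x(dz_1,\dots,dz_\ell) = Q^{x}_1(dz_1)\,Q^{x,z_1}_2(dz_2)\cdots$ can be chosen so that the kernels depend measurably on all the conditioning variables simultaneously (not just on $x$ for fixed $z_1,\dots$). This is true — it is the standard ``measurable disintegration with parameter'' statement for Polish spaces — but it has to be cited or spelled out rather than waved at, and it is the step where a naive argument could silently fail. Everything else (the pushforward identity, the Fubini step, the elementary properties of generalized inverses) is routine.
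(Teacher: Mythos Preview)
Your approach is correct and genuinely different from the paper's. Both proofs start by disintegrating $Q$ as $x\mapsto Q^x$ and reduce to producing, measurably in $x$, a Borel map pushing $L$ forward to $Q^x$; the divergence is in how this family of maps is chosen and how joint measurability is obtained.

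The paper takes an optimal-transport route: for each $x$ it invokes \cite[Corollary 10.44]{V2} to obtain the \emph{unique} Borel map $F_x$ with $c$-cyclically monotone graph (for $c=\|\cdot\|^2$) sending $L$ to $Q^x$, and then proves a separate abstract measurable-selection lemma (their Lemma~\ref{lem selection abstract}, following \cite{FGM}) showing that such uniquely determined monotone maps can be assembled into a jointly Borel $F$. The uniqueness is what makes the selection argument go through cleanly. Your Knothe--Rosenblatt construction sidesteps optimal transport entirely and is more elementary in that sense; the price is exactly the obstacle you flag, namely that the successive one-dimensional conditional kernels $Q^{x,z_1,\dots,z_{j-1}}_j$ must be chosen jointly measurable in all their conditioning variables, and this has to be justified rather than asserted. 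Both approaches are valid; yours is more hands-on and avoids citing the transport machinery, while the paper's packages the measurability issue into a reusable selection lemma at the cost of appealing to Villani and \cite{FGM}.
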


\begin{proof}
Let $\mathbb{R}^r \ni x\mapsto Q^x$ be a regular disintegration of $Q$ with respect to the first $r$ coordinates. Consider the Borel function
$x\mapsto(L,Q^x)\in(\mathcal{P}(\mathbb{R}^\ell))^2$.
All assumptions of \cite[Corollary 10.44]{V2} are satisfied. Thus we have ($q(dx)$-almost surely) the existence of a unique Borel mapping $F_x(\cdot):\mathbb{R}^\ell\to \mathbb{R}^\ell$ such that $F_x(L)=Q^x$ and such that its graph is cyclically monotone (i.e.\ $c$-cyclically monotone for $c=\| \cdot\|^2$). 
By Lemma \ref{lem selection abstract} below, there exists a Borel function $F:\mathbb{R}^r\times \mathbb{R}^\ell \to \mathbb{R}^\ell$ such that
$F(x,L) = Q^x ,\, q(dx)$-a.s. We finally verify that $F(q\otimes L ) = Q$, which concludes the proof:
\begin{align*}\textstyle
\int\int (h\circ (I,F)) dq\otimes dL &\textstyle = \int \left(\int h(x, F(x,y))L(dy)\right ) q(dx) \, = \int \left (\int h(x,y)F(x,L)(dy)\right ) q(dx) \\ &\textstyle = \int \left(\int h(x,y)Q^x(dy)\right ) q(dx) \, = \int hdQ.
\end{align*}
\end{proof}

\begin{lem}
\label{lem selection abstract}
Let $(E,\Sigma,m)$ be a $\sigma$-finite measure space and consider a measurable function $E\ni \lambda\mapsto (\mu_\lambda,\nu_\lambda)\in \mathcal{P}(\mathbb{R}^\ell)\times \mathcal{P}(\mathbb{R}^\ell)$. We are also given $c:\mathbb{R}^\ell\times \mathbb{R}^\ell\to \mathbb{R}$ continuous and bounded from below. Assume that for $m$-a.e. $\lambda$, there exists a unique mapping $F_\lambda:\mathbb{R}^\ell\to \mathbb{R}^\ell$ satisfying: $F_\lambda$ is Borel measurable with $F_\lambda(\mu_\lambda)=\nu_\lambda$, and the graph of $F_\lambda$ is $c$-cyclically monotone. Then there exists $F:E\times \mathbb{R}^\ell \to \mathbb{R}^\ell$ measurable such that $m(d\lambda)$-a.s:
$F(\lambda,y)=F_\lambda(y),\, \mu_\lambda(dy)$-a.s.
\end{lem}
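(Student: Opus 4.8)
The plan is to reduce the statement to a measurable selection of transport \emph{plans}, and then to extract the map by disintegration. Work in the Polish space $\Pcal(\RR^\ell\times\RR^\ell)$ of couplings endowed with weak convergence, and write $p^1,p^2$ for the coordinate projections of $\RR^\ell\times\RR^\ell$. Inside it consider the set $D$ of \emph{deterministic} couplings (those concentrated on the graph of a Borel map $\RR^\ell\to\RR^\ell$) and the set $M$ of couplings concentrated on some $c$-cyclically monotone Borel set; both are Borel --- for $D$ this is classical, and the Borel character of $M$ is part of the standard measure-theoretic machinery surrounding $c$-cyclical monotonicity (see \cite{V2} and the references therein; since $c$ is continuous and bounded below, $\pi\in M$ if and only if $\operatorname{supp}\pi$ itself is $c$-cyclically monotone). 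For $\lambda\in E$ put
\[
\Gamma_\lambda:=\bigl\{\pi\in D\cap M\ :\ p^1_*\pi=\mu_\lambda,\ p^2_*\pi=\nu_\lambda\bigr\}.
\]
If $\pi\in D$ has marginals $\mu_\lambda,\nu_\lambda$ then $\pi=(\mathrm{id},T)_*\mu_\lambda$ with $T_*\mu_\lambda=\nu_\lambda$; if in addition $\pi\in M$, the graph of $T$ is, off a $\mu_\lambda$-null set, $c$-cyclically monotone, so the uniqueness hypothesis (read in the natural $\mu_\lambda$-a.e.\ sense) forces $T=F_\lambda$ $\mu_\lambda$-a.e., that is $\pi=\pi_\lambda:=(\mathrm{id},F_\lambda)_*\mu_\lambda$. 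Since conversely $\pi_\lambda\in\Gamma_\lambda$, we conclude $\Gamma_\lambda=\{\pi_\lambda\}$ for $m$-a.e.\ $\lambda$.

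Because $\lambda\mapsto(\mu_\lambda,\nu_\lambda)$ is measurable and $\pi\mapsto(p^1_*\pi,p^2_*\pi)$ is continuous, the set $\{(\lambda,\pi):\pi\in\Gamma_\lambda\}$ is measurable in $E\times\Pcal(\RR^\ell\times\RR^\ell)$ with nonempty (for $m$-a.e.\ $\lambda$, singleton) sections. A measurable selection theorem (Jankov--von Neumann; or, since the sections are a.e.\ singletons, a direct projection argument) then yields $\lambda\mapsto\pi_\lambda$ measurable with respect to the $m$-completion of $\Sigma$, after redefining $\pi_\lambda$ arbitrarily on the $m$-null set where $\Gamma_\lambda$ need not be a singleton. (One may assume without loss for our purposes that $E$ is standard Borel, e.g.\ $E=\RR^r$ as in the application; this is the only point where the nature of $E$ matters.)

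Next, by the parametrized disintegration theorem there is a jointly measurable kernel $(\lambda,y)\mapsto\kappa^y_\lambda\in\Pcal(\RR^\ell)$ with $\pi_\lambda(dy,dz)=\mu_\lambda(dy)\,\kappa^y_\lambda(dz)$; as $\pi_\lambda$ is deterministic, $\kappa^y_\lambda=\delta_{F_\lambda(y)}$ for $m$-a.e.\ $\lambda$ and $\mu_\lambda$-a.e.\ $y$. The set $\operatorname{Dir}:=\{\delta_x:x\in\RR^\ell\}$ is Borel in $\Pcal(\RR^\ell)$ (it is the zero set of the continuous functional $\nu\mapsto\int\!\!\int(\|x-y\|\wedge 1)\,\nu(dx)\,\nu(dy)$) and the evaluation $\delta_x\mapsto x$ is Borel on it; extending it by $0$ off $\operatorname{Dir}$ gives a Borel map $e:\Pcal(\RR^\ell)\to\RR^\ell$. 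Setting $F(\lambda,y):=e(\kappa^y_\lambda)$ produces a measurable $F:E\times\RR^\ell\to\RR^\ell$ with $F(\lambda,y)=F_\lambda(y)$ for $\mu_\lambda$-a.e.\ $y$, $m$-a.e.\ $\lambda$, as wanted.

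The crux of the argument, and the only genuinely delicate point, is the assertion in the first paragraph that membership of a coupling in $M$ is a Borel (not merely analytic) condition --- this is the real reason the cost $c$ is required to be continuous and bounded below; together with the joint measurability of the disintegration it carries essentially all the technical weight, the remaining steps being routine selection and Borel-isomorphism bookkeeping.
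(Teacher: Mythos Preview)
Your argument is correct, but it follows a genuinely different route from the paper's. The paper proceeds entirely through set-valued analysis: it shows that the closed-valued map $(\mu,\nu)\mapsto\tilde\Pi(\mu,\nu):=\{\pi\in\Pi(\mu,\nu):\operatorname{supp}\pi\text{ is $c$-cyclically monotone}\}$ is measurable (closed pre-images of closed sets, via the stability of $c$-cyclical monotonicity under weak limits as in \cite[Theorem~5.20]{V2}), then passes to the set-valued map $(\mu,\nu)\mapsto\Phi(\mu,\nu):=\overline{\bigcup_{\pi\in\tilde\Pi(\mu,\nu)}\operatorname{supp}\pi}$, and finally imports the argument of Fontbona--Gu\'erin--M\'el\'eard \cite{FGM} to extract the map directly from $\Phi$. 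In particular the paper never isolates the deterministic coupling $\pi_\lambda$, and never needs the Borel character of the set $D$ of graph-supported measures.

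Your approach---select the unique deterministic $c$-cyclically monotone coupling, then disintegrate---is more self-contained and relies only on standard tools (Jankov--von Neumann selection, product disintegration, extraction of the barycenter from a Dirac kernel), which is its main advantage. The price is the claim that $D$ is Borel, which you label ``classical'' without reference. It is true, but deserves a line: for each $f\in C_b(\RR^\ell)$ the conditional-variance functional $\pi\mapsto\int f(y)^2\,d\pi-\sup_{g\in C_b}\bigl\{2\int g(x)f(y)\,d\pi-\int g(x)^2\,d\pi\bigr\}$ is upper semicontinuous and nonnegative, so its zero set is $G_\delta$, and $D$ is the countable intersection over a dense family of $f$'s. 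The paper's route sidesteps this by never restricting to deterministic couplings. Your acknowledged reduction to standard Borel $E$ is harmless here (and implicitly present in the paper's reliance on \cite{FGM} as well), since one can first build $F$ as a measurable function of $(\mu,\nu,y)$ on the Polish space $\Pcal(\RR^\ell)^2\times\RR^\ell$ and then compose with $\lambda\mapsto(\mu_\lambda,\nu_\lambda)$.
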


\begin{proof}
Let $\tilde{\Pi}(\mu,\nu):=\{\pi \in \Pi(\mu,\nu): \, \text{supp($\pi$) is $c$-cyclically monotone}\}$. We first remark that the set-valued map $(\mu,\nu)\mapsto \tilde{\Pi}(\mu,\nu)$ is measurable. To wit, $\tilde{\Pi}(\mu,\nu)$ is closed and the pre-image of closed sets by $\tilde{\Pi}(\cdot,\cdot)$ are closed. The argument for the first fact is contained in the proof of Theorem 5.20 in \cite[p.77]{V2}. As for the second fact, let $\Sigma\subset \mathcal{P}(\mathbb{R}^\ell\times \mathbb{R}^\ell)$ be closed, and $(\mu_n,\nu_n)\rightarrow (\mu,\nu)$ with $(\mu_n,\nu_n)\in \tilde{\Pi}^{-1}(\Sigma) $. The latter means that there exists $\pi_n\in \Pi(\mu,\nu)\cap \Sigma $ with supp$(\pi_n)$ being $c$-cyclically monotone. By Prokhorov, up to selection of a subsequence, we may assume that $\pi_n\rightarrow \pi \in \Pi(\mu,\nu)\cap \Sigma$, and again argumenting as in the proof of Theorem 5.20 in \cite{V2} we also get that $\pi$ has $c$-cyclically monotone support. This implies $(\mu,\nu)\in \tilde{\Pi}^{-1}(\Sigma) $, and all in all we get the measurability of $\tilde{\Pi}(\cdot,\cdot)$. We also remark that $\tilde{\Pi}(\mu,\nu)\neq \emptyset$, by the argument in the first paragraph of the proof of Theorem 10.42 in \cite[p.251]{V2}. We now closely follow the arguments in the proof of \cite[Theorem 1.1]{FGM}.
 First remark that the set-valued mapping
 $$(\mu,\nu)\mapsto \Phi(\mu,\nu):=\overline{\cup_{\pi\in \tilde{\Pi}(\mu,\nu)}\text{supp($\pi$)}}\subset \mathbb{R}^\ell\times \mathbb{R}^\ell$$
 is measurable. This easily follows, similarly to \cite[Theorem 2.1]{FGM}, by the measurability of $(\mu,\nu)\mapsto \tilde{\Pi}(\mu,\nu)$.  Now \cite[Corollary 2.3]{FGM} is valid for our $\Phi$ without any changes. Finally, the proof of \cite[Theorem 1.1]{FGM} can be fully translated in our terms.
 
\end{proof}

We provide the missing argument for Step 3 in the proof of Theorem \ref{thm approx}, and specifically Remark \ref{rem SDE}. We use the notation adopted in that part of the article.

\begin{lem}\label{lem SDE}
Given $\bnu\in\Pcal(\mathbb{R}^{2d})$ and $\pi\in\Pi_c^0(\bgamma_0,\bnu)$, there exist $\bmu\in\Pcal(\Ccal)$ and $P\in\Pi_c(\bgamma,\bmu)$ such that 
$\Lcal_P(\omega_0,\omega_1,\sw_0,\sw_1)=\pi$. This measure $P$ is the joint law of the unique weak solution of an SDE of the form
$dX_t = \beta_t dt + dW_t,$
namely $P=\Lcal(\bW,\bX)$.
\end{lem}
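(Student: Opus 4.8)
The plan is to imitate, on the two–point grid $\mathbb{T}_0=\{0,1\}$, the bridge construction used for $\mathbb{T}_1$ in Remark~\ref{rem SDE}: I would realize $\pi$ as the joint law at times $0$ and $1$ of a Brownian motion and of an interpolating diffusion whose terminal value is produced by the randomised selection of Lemma~\ref{lem selection applica}. First I would read off the structure of $\pi$. Since the first marginal of $\pi$ is $\bgamma_0=\delta_0\otimes N(0,I_d)$ on $\R^{2d}$, the set $\{\omega_0=0\}$ is $\pi$--full, and the causality constraint of Definition~\ref{def causal} at $t=0$ --- where $\FF^1_0$ is $\bgamma_0$--trivial --- forces $\pi^{(\omega_0,\omega_1)}(\sw_0\in\cdot\,)$ to be a.s.\ a fixed measure $\lambda_0\in\Pcal(\R^d)$. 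Hence $\sw_0$ is $\pi$--independent of $(\omega_0,\omega_1)$, the $(\omega_1,\sw_0)$--marginal of $\pi$ equals $q:=N(0,I_d)\otimes\lambda_0$, and the only unconstrained ingredient is the conditional law of $\sw_1$ given $(\omega_1,\sw_0)$.

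\emph{Selection and bridge SDE.} I would build an auxiliary probability space carrying mutually independent $X_0\sim\lambda_0$, a $d$--dimensional Brownian motion $\bW$ started at $0$, and $U\sim\mathrm{Unif}([0,1]^d)$. Applying Lemma~\ref{lem selection applica} with $Q:=\Lcal_\pi(\omega_1,\sw_0,\sw_1)$, $r=2d$, $\ell=d$ (so that the first $2d$ coordinates of $Q$ have law $q$), one obtains a Borel map $\Psi_1:\R^{2d}\times[0,1]^d\to\R^d$ with $(W_1,X_0,\Psi_1(W_1,X_0,U))\sim Q$, because $(W_1,X_0)\sim q$ and $U$ is independent of it. Then I would consider on $[0,1)$ the SDE
$$dX_t=\frac{\Psi_1(W_t,X_0,U)-X_t}{1-t}\,dt+dW_t,\qquad X_0\text{ as above},$$
whose drift is Lipschitz in $X_t$ on each $[0,1-\delta]$, so that there is a unique solution on $[0,1)$, namely
$$X_t=X_0(1-t)+(1-t)\int_0^t\frac{\Psi_1(W_s,X_0,U)}{(1-s)^2}\,ds+(1-t)\int_0^t\frac{dW_s}{1-s}.$$
The stochastic integral term tends to $0$ a.s.\ as $t\uparrow1$ (standard Brownian–bridge estimate), so $X_1:=\lim_{t\uparrow1}X_t$ exists; one then checks $X_1=\Psi_1(W_1,X_0,U)$, whence $X$ extends to a continuous process on $[0,1]$ and $(0,W_1,X_0,X_1)\sim\pi$. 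With $\beta_t:=(\Psi_1(W_t,X_0,U)-X_t)/(1-t)$, the pair $(\bW,\bX)$ is a weak solution of an SDE of the form $dX_t=\beta_t\,dt+dW_t$, and it is the unique one, since once the law of $(X_0,U)$ and the requirement that $W$ be a Brownian motion are fixed the displayed formula determines the joint law of $(\bW,\bX)$.

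\emph{Conclusion.} I would set $P:=\Lcal(\bW,\bX)$ and $\bmu:=\Lcal(\bX)\in\Pcal(\Ccal)$. Then $\Lcal_P(\omega_0,\omega_1,\sw_0,\sw_1)=\pi$ by construction, the first marginal of $P$ is $\bgamma$, and $P\in\Pi_c(\bgamma,\bmu)$: because $X$ is adapted to the completed filtration of $(X_0,U,\bW)$ with $(X_0,U)$ independent of $\bW$, for every $t$ the $\sigma$--field $\sigma(X_s:s\le t)$ is conditionally independent of $\sigma(\bW)$ given $\sigma(W_s:s\le t)$, which is exactly the causality of $P$ in the sense of Definition~\ref{def causal}. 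This would complete the proof.

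\emph{Main obstacle.} The delicate point is the endpoint identification $\lim_{t\uparrow1}X_t=\Psi_1(W_1,X_0,U)$, i.e.\ that $(1-t)\int_0^t\frac{\Psi_1(W_s,X_0,U)}{(1-s)^2}\,ds\to\Psi_1(W_1,X_0,U)$ as $t\uparrow1$. This is immediate when $w\mapsto\Psi_1(w,X_0,U)$ is continuous at $w=W_1$, but the selection $\Psi_1$ is only Borel; I would resolve it either by first reducing to a selection continuous in the $w$--variable (exploiting that $\Psi_1(\cdot,x_0,\cdot)$ can be taken built from monotone transport maps and approximating the conditional laws accordingly) or by a direct occupation--time estimate showing that along the Brownian path the times $s$ near $1$ at which $\Psi_1(W_s,X_0,U)$ is far from $\Psi_1(W_1,X_0,U)$ contribute negligibly to the integral.
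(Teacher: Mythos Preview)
Your argument is essentially the paper's: same selection of $\Psi_1$ via Lemma~\ref{lem selection applica}, same bridge SDE $dX_t=\frac{\Psi_1-X_t}{1-t}dt+dW_t$, same explicit solution, and same causality verification through the enlarged filtration generated by $(U,X_0,\bW)$. The endpoint identification you carefully flag as the main obstacle is dispatched in the paper by a one-line appeal to L'H\^opital's rule, so the regularity-in-$w$ concern you raise is not addressed there either.
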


\begin{proof}
Recall that $\gamma_0(dz_0,dz_1)=\delta_0(dz_0)\Ncal(dz_1)$ where $\Ncal$ is the standard Gaussian in $\mathbb{R}^d$. We consider a probability space supporting a random variable $U$ uniformly distributed in $[0,1]^d$, a random variable $X_0$  distributed according to the first marginal of $\bnu$, and a standard Brownian motion $\bW$, such that $U,X_0,\bW$ are independent.
We first observe that, by Lemma \ref{lem selection applica}, there exists a Borel function $\Psi:\R^d\times \R^d\times \R^d\to\R^d$ such that
$$(\,0,W_1,X_0,\Psi(U,W_1,X_0)\,)\sim \pi.$$
Second, we define the following SDE, with initial condition $X_0$:
$$\textstyle dX_t = \frac{\Psi(U,W_t,X_0)-X_t}{1-t}dt + dW_t.$$
Note that there is at most one solution to this SDE in every interval $[0,T]$ with $T<1$, by the theory of Lipschitz SDEs with random coefficients. This proves that the solution is unique in $[0,1)$. 
Third, we observe that a solution of the above SDE is given by
$$\textstyle X_t = X_0(1-t) + (1-t)\int_0^t \frac{\Psi(U,W_s,X_0)}{(1-s)^2}ds + (1-t)\int_0^t \frac{1}{1-s}dW_s\,,$$
and therefore this is the unique solution in $[0,1)$. Finally, we observe that sending $t\to 1$ (by L'Hopital rule) we have $X_1:=X_{1-}=\Psi(U,W_1,X_0)$.
We now observe that $\Lcal(W_0,W_1,X_0,X_1)=\pi$ as desired, and notice that $P:=\Lcal(\bW,\bX)$ is causal, since $\bX$ is adapted to the filtration $\Gcal_t:=\{(U,X_0,W_s):s\leq t\}$ and $\bW$ is a $\Gcal$-Brownian motion.   
\end{proof}

\bibliographystyle{plain}
\bibliography{cite}

\begin{thebibliography}{10}

\bibitem{ABZ}
Beatrice Acciaio, Julio Backhoff-Veraguas, and Anastasiia Zalashko.
\newblock Causal optimal transport and its links to enlargement of filtrations
  and continuous-time stochastic optimization.
\newblock {\em arXiv:1611.02610}, 2016.

\bibitem{almgren2001optimal}
Robert Almgren and Neil Chriss.
\newblock Optimal execution of portfolio transactions.
\newblock {\em Journal of Risk}, 3:5--40, 2001.

\bibitem{andersson2011maximum}
Daniel Andersson and Boualem Djehiche.
\newblock A maximum principle for sdes of mean-field type.
\newblock {\em Applied Mathematics \& Optimization}, 63(3):341--356, 2011.

\bibitem{BBEP}
Julio Backhoff-Veraguas, Mathias Beiglb{\"o}ck, Manu Eder, and Alois Pichler.
\newblock Fundamental properties of process distances.
\newblock {\em arXiv:1701.03955}, 2017.

\bibitem{BBLZ}
Julio Backhoff-Veraguas, Mathias Beiglb{\"o}ck, Yiqing Lin, and Anastasiia
  Zalashko.
\newblock Causal transport in discrete time and applications.
\newblock {\em SIAM Journal on Optimization}, 27(4):2528--2562, 2017.

\bibitem{BP}
Matteo Basei and Huy{\^e}n Pham.
\newblock Linear-quadratic mckean-vlasov stochastic control problems with
  random coefficients on finite and infinite horizon, and applications.
\newblock {\em arXiv:1711.09390}, 2017.

\bibitem{benamou2015iterative}
Jean-David Benamou, Guillaume Carlier, Marco Cuturi, Luca Nenna, and Gabriel
  Peyr{\'e}.
\newblock Iterative {B}regman projections for regularized transportation
  problems.
\newblock {\em SIAM Journal on Scientific Computing}, 37(2):A1111--A1138, 2015.

\bibitem{BFY}
Alain Bensoussan, Jens Frehse, and Phillip Yam.
\newblock {\em Mean {F}ield {G}ames and {M}ean {F}ield {T}ype {C}ontrol
  {T}heory}, volume 101.
\newblock Springer, 2013.

\bibitem{buckdahn2011general}
Rainer Buckdahn, Boualem Djehiche, and Juan Li.
\newblock A general stochastic maximum principle for sdes of mean-field type.
\newblock {\em Applied Mathematics \& Optimization}, 64(2):197--216, 2011.

\bibitem{buckdahn2009mean}
Rainer Buckdahn, Boualem Djehiche, Juan Li, Shige Peng, et~al.
\newblock Mean-field backward stochastic differential equations: a limit
  approach.
\newblock {\em The Annals of Probability}, 37(4):1524--1565, 2009.

\bibitem{Cardaliaguet}
Pierre Cardaliaguet.
\newblock Notes on {P.L.} {L}ions' lectures at the {C}ollege de {F}rance.
\newblock Technical report, Working Paper, September, 2013.

\bibitem{CL}
Pierre Cardaliaguet and Charles-Albert Lehalle.
\newblock Mean field game of controls and an application to trade crowding.
\newblock {\em Mathematics and Financial Economics}, pages 1--29, 2016.

\bibitem{CD_AP}
Ren{\'e} Carmona and Fran{\c{c}}ois Delarue.
\newblock Forward--backward stochastic differential equations and controlled
  {M}c{K}ean--{V}lasov dynamics.
\newblock {\em The Annals of Probability}, 43(5):2647--2700, 2015.

\bibitem{CarmonaDelarue_book_vol_I}
Ren{\'e} Carmona and Fran{\c{c}}ois Delarue.
\newblock {\em Probabilistic Theory of Mean Field Games with Applications.
  Volume I: Mean Field FBSDEs, Control and Games}.
\newblock Springer, 2017.

\bibitem{CDL}
Ren{\'e} Carmona, Fran{\c{c}}ois Delarue, and Aim{\'e} Lachapelle.
\newblock Control of {M}c{K}ean--{V}lasov dynamics versus {M}ean {F}ield
  {G}ames.
\newblock {\em Mathematics and Financial Economics}, 7(2):131--166, 2013.

\bibitem{CL15}
Ren{\'e} Carmona and Daniel Lacker.
\newblock A probabilistic weak formulation of mean field games and
  applications.
\newblock {\em The Annals of Applied Probability}, 25(3):1189--1231, 2015.

\bibitem{CJ}
{\'A}lvaro Cartea and Sebastian Jaimungal.
\newblock Incorporating order-flow into optimal execution.
\newblock {\em Mathematics and Financial Economics}, 10(3):339--364, 2016.

\bibitem{cuturi2013sinkhorn}
Marco Cuturi.
\newblock Sinkhorn distances: Lightspeed computation of optimal transport.
\newblock In {\em Advances in neural information processing systems}, pages
  2292--2300, 2013.

\bibitem{FGM}
Joaquin Fontbona, H{\'e}l{\`e}ne Gu{\'e}rin, and Sylvie M{\'e}l{\'e}ard.
\newblock Measurability of optimal transportation and strong coupling of
  martingale measures.
\newblock {\em Electronic Communications in Probability}, 15:124--133, 2010.

\bibitem{G}
Emmanuel Gobet and Maxime Grangereau.
\newblock {M}c{K}ean optimal stochastic control of microgrid equipped with
  photo-voltaic panels and battery.
\newblock {\em Preprint}, 2018.

\bibitem{graber2016linear}
P.~Jameson Graber.
\newblock Linear quadratic mean field type control and mean field games with
  common noise, with application to production of an exhaustible resource.
\newblock {\em Applied Mathematics \& Optimization}, 74(3):459--486, 2016.

\bibitem{La17}
Daniel Lacker.
\newblock Limit theory for controlled {M}c{K}ean--{V}lasov dynamics.
\newblock {\em SIAM Journal on Control and Optimization}, 55(3):1641--1672,
  2017.

\bibitem{La}
R{\'e}mi Lassalle.
\newblock Causal transference plans and their {M}onge-{K}antorovich problems.
\newblock {\em arXiv:1303.6925}, 2013.

\bibitem{LZ}
R{\'e}mi Lassalle and Jean~Claude Zambrini.
\newblock A weak approach to the stochastic deformation of classical mechanics.
\newblock {\em Journal of Geometric Mechanics}, 8(2), 2016.

\bibitem{meyer2012mean}
Thilo Meyer-Brandis, Bernt {\O}ksendal, and Xun~Yu Zhou.
\newblock A mean-field stochastic maximum principle via malliavin calculus.
\newblock {\em Stochastics An International Journal of Probability and
  Stochastic Processes}, 84(5-6):643--666, 2012.

\bibitem{2018-Peyre-computational-ot}
Gabriel Peyr{\'e} and Marco Cuturi.
\newblock Computational optimal transport.
\newblock Preprint 1803.00567, Arxiv, 2018.

\bibitem{pflug2009version}
Georg~Ch Pflug.
\newblock Version-independence and nested distributions in multistage
  stochastic optimization.
\newblock {\em SIAM Journal on Optimization}, 20(3):1406--1420, 2009.

\bibitem{pflug2012distance}
Georg~Ch Pflug and Alois Pichler.
\newblock A distance for multistage stochastic optimization models.
\newblock {\em SIAM Journal on Optimization}, 22(1):1--23, 2012.

\bibitem{pflug2016multistage}
Georg~Ch Pflug and Alois Pichler.
\newblock {\em Multistage stochastic optimization}.
\newblock Springer, 2016.

\bibitem{PW}
Huy{\^e}n Pham and Xiaoli Wei.
\newblock Bellman equation and viscosity solutions for mean-field stochastic
  control problem.
\newblock {\em ESAIM: Control, Optimisation and Calculus of Variations},
  24(1):437--461, 2018.

\bibitem{V2}
C{\'e}dric Villani.
\newblock {\em Optimal transport: old and new}, volume 338.
\newblock Springer Science \& Business Media, 2008.

\bibitem{yong2013linear}
Jiongmin Yong.
\newblock Linear-quadratic optimal control problems for mean-field stochastic
  differential equations.
\newblock {\em SIAM Journal on Control and Optimization}, 51(4):2809--2838,
  2013.

\bibitem{Z}
Anastasiia Zalashko.
\newblock Causal optimal transport: theory and applications.
\newblock {\em PhD Thesis, University of Vienna}, 2017.

\end{thebibliography}

\end{document}